\newcommand{\cC}{\mathcal{C}}
\newcommand{\cH}{\mathcal{H}}
\newcommand{\cN}{\mathcal{N}}
\renewcommand{\Vert}{\mathsf{VF}}
\newcommand{\Aff}{\mathsf{Aff}}
\newcommand{\SAff}{\mathsf{SAff}}
\DeclareMathOperator{\Cone}{Cone}
\DeclareMathOperator{\Lip}{Lip}
\DeclareMathOperator{\supp}{supp}
\DeclareMathOperator{\Span}{span}
\renewcommand{\H}{\mathbb{H}}
\newcommand{\R}{\mathbb{R}}
\newcommand{\Z}{\mathbb{Z}}
\newcommand{\lip}{\mathrm{Lip}}
\newcommand{\from}{\colon}
\newcommand{\zero}{\mathbf{0}}
\newcommand{\ud}[0]{\,\mathrm{d}}
\newtheorem{thm}{Theorem}[section]
\newtheorem{lemma}[thm]{Lemma}
\newtheorem{cor}[thm]{Corollary}
\newtheorem{prop}[thm]{Proposition}
\theoremstyle{remark}
\newtheorem*{roadmap}{Roadmap}
\DeclareMathOperator{\diam}{diam}
\title{The strong geometric lemma for intrinsic Lipschitz graphs in Heisenberg groups}
\author{Vasileios Chousionis}
\address{Department of Mathematics, University of Connecticut}
\email{vasileios.chousionis@uconn.edu}
\author{Sean Li}
\address{Department of Mathematics, University of Connecticut}
\email{sean.li@uconn.edu}
\author{Robert Young}
\address{Courant Institute of Mathematical Sciences, New York University}
\email{ryoung@cims.nyu.edu}
\thanks{V.~C.\ was supported by Simons Foundation Collaboration grant 521845. R.~Y.\ was supported by NSF grant 1612061}
\begin{document}
\maketitle
\begin{abstract}
    We show that the $\beta$--numbers of intrinsic Lipschitz graphs of Heisenberg groups $\H_n$ are locally Carleson integrable when $n \geq 2$.  Our technique relies on a recent Dorronsoro inequality \cite{FO} as well as a novel slicing argument.   A key ingredient in our proof is a Euclidean inequality bounding the $\beta$--number of a function on a cube of $\R^n$ using the $\beta$--number of the restriction of the function to codimension--1 slices of the cube. 
\end{abstract}
\section{Introduction}
In \cite{JonesTSP}, Jones characterized subsets $E \subset \R^2$ that lie on finite length rectifiable curves in the now-famous traveling salesman theorem.  Given a ball $B(x,r) \subset \R^2$, he introduced the quantity
\begin{align*}
  \beta_E(x,r) = \inf_L \sup_{z \in B(x,r) \cap E} \frac{d(z,L)}{r},
\end{align*}
where the infimum is over all affine lines $L$ in $\R^2$.  This quantity, known as the $\beta$--number, is a scale-invariant measure of how close $E \cap B(x,r)$ is to a line.  Jones proved the following theorem.
\begin{thm}
\label{tstj}
  A subset $E \subset \R^2$ lies on a finite length rectifiable curve if and only if
  \begin{align}
   \diam(E)+ \int_0^\infty \int_{\R^2} \beta_E(x,r)^2 \ud x \;\frac{\ud r}{r^2} < \infty. \label{e:carleson}
  \end{align}
\end{thm}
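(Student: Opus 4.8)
The first step is to replace the integral in \eqref{e:carleson} by the comparable dyadic quantity
\[
  \beta_E^* \;:=\; \diam(E) + \sum_{Q} \beta_E(3Q)^2 \,\diam(Q),
\]
the sum running over dyadic squares $Q \subset \R^2$, with $\beta_E(3Q)$ the $\beta$--number on the concentric ball of radius $\sim \diam(Q)$ and the convention $\beta_E(3Q):=0$ when $3Q$ misses $E$. The comparability $\beta_E^* \sim \diam(E) + \int_0^\infty\!\int_{\R^2}\beta_E(x,r)^2\ud x\,\frac{\ud r}{r^2}$ is a routine Whitney-type discretization that uses the stability of $\beta_E(x,r)$ under bounded dilations of $r$ and bounded translations of $x$. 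I may also assume $E$ closed, and bounded when proving the ``if'' direction (reducing the unbounded case by exhaustion).

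\textbf{Necessity.} Suppose $E \subset \Gamma$ with $\Gamma$ the image of a rectifiable curve; parametrize $\Gamma$ by arclength. The core is a \emph{length-excess estimate}: for a dyadic square $Q$ meeting $\Gamma$, if $L_Q$ is a near-best line for $\beta_E(3Q)$, then some $z_Q \in E \cap 3Q \subset \Gamma$ has $d(z_Q,L_Q) \gtrsim \beta_E(3Q)\diam(Q)$, and a subarc of $\Gamma$ crossing $3Q$ through $z_Q$ exceeds the length of the corresponding chord by $\gtrsim \beta_E(3Q)^2\diam(Q)$ — a Pythagoras-type bound, since a detour of height $h$ over a base of length $\ell$ costs at least $\sim h^2/\ell$. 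Summing over $Q$ then requires an orthogonality/bounded-overlap argument: grouping squares by scale and arranging the associated subarcs along the single traversal of $\Gamma$, the excesses $h_Q^2/\ell_Q$ add up against $\mathcal H^1(\Gamma)$ because $\sum \ell_Q$ over one pass of $\Gamma$ is $\lesssim \mathcal H^1(\Gamma)$. This gives $\beta_E^* \lesssim \mathcal H^1(\Gamma)$.

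\textbf{Sufficiency.} Assume $\beta_E^* < \infty$ with $E$ bounded, and fix a small $\varepsilon>0$. I would build nested maximal $2^{-k}$--separated nets $X_0 \subset X_1 \subset \cdots \subset E$ and connected polygonal curves $\Gamma_k$ through $X_k$, passing from $\Gamma_k$ to $\Gamma_{k+1}$ by local reroutings that absorb the new points of $X_{k+1}\setminus X_k$. Split the scale--$2^{-k}$ dyadic squares into \emph{bad} ones, where $\beta_E(CQ) \ge \varepsilon$ (here $C$ is a fixed large dilation constant, and $\sum_Q\beta_E(CQ)^2\diam(Q) \lesssim \beta_E^*$ by stability), and \emph{good} ones. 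The bad squares satisfy $\sum_{\mathrm{bad}}\diam(Q) \le \varepsilon^{-2}\sum_Q \beta_E(CQ)^2\diam(Q) < \infty$, so even a crude accounting of the length spent there is summable; in a good square $Q$ the relevant points of $E$ lie in an $\varepsilon\diam(Q)$--tube about a line, and the rerouting inside $Q$ adds length only $\lesssim \beta_E(CQ)^2\diam(Q)$. Summing over all squares gives $\mathcal H^1(\Gamma_k) \lesssim \beta_E^*$ uniformly in $k$. Reparametrizing each $\Gamma_k$ with constant speed on $[0,1]$ turns this into a uniform Lipschitz bound, so Arzel\`a--Ascoli yields a subsequential uniform limit $\gamma\colon[0,1]\to\R^2$; by lower semicontinuity of length $\mathcal H^1(\gamma([0,1])) \le \liminf_k \mathcal H^1(\Gamma_k) < \infty$, while $\overline E = \overline{\bigcup_k X_k} \subset \gamma([0,1])$ because the vertices $X_k$ converge into the limiting image. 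Hence $E$ lies on a finite length rectifiable curve.

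\textbf{Main obstacle.} The substance is the ``if'' construction. The hard part will be to design the rerouting scheme so that simultaneously (i) each $\Gamma_k$ stays connected and within $O(2^{-k})$ of $E$, (ii) the good--square length increments genuinely telescope across scales rather than accumulating overlaps, and (iii) one really extracts the \emph{quadratic} gain $\beta_E(CQ)^2$ in good squares, not merely a linear-in-$\beta$ bound — which forces careful triangle-inequality bookkeeping and uses that $\beta$ is an $L^\infty$/width quantity. The orthogonality step in the necessity direction is a gentler instance of the same phenomenon.
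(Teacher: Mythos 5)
The paper does not give a proof of this theorem; it is Jones's traveling salesman theorem, stated here as background and cited to \cite{JonesTSP}, so there is no in-paper argument to compare against. Evaluating your sketch on its own terms: it is a recognizable and broadly correct outline of the standard proof. The reduction to a dyadic Carleson sum, the Pythagoras-type length-excess estimate $\gtrsim \beta_E(3Q)^2\diam(Q)$ per square together with a bounded-overlap orthogonality argument for necessity, and the farthest-point-insertion / nested-net construction with good/bad squares and an Arzel\`a--Ascoli limit for sufficiency are precisely the ingredients of Jones's original argument and its later streamlinings (Okikiolu, Schul). You also correctly identify in your ``main obstacle'' paragraph where the real work lies: the sufficiency direction requires that the rerouting in a good square actually be charged at the quadratic rate $\beta^2\diam(Q)$ rather than linearly, which hinges on the existing curve $\Gamma_k$ already tracking the approximating line at a comparable scale --- an inductive control that your sketch names but does not supply. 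That step, and the bookkeeping needed to make the per-scale increments telescope without hidden overlaps, is the genuinely delicate content of Jones's construction and would need to be carried out in detail; as a sketch, though, your outline is faithful to the known proof.
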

Note that the measure $\frac{\ud r}{r^2}$ has a singularity at $r = 0$, so \eqref{e:carleson} can be viewed as saying $E$ is close to a line ($\beta_E(x,r)$ is small) for ``most'' balls according to the measure $\ud x \frac{\ud r}{r^2}$.  Thus, this can be viewed as a quantitative version of the classical Rademacher's theorem, which implies that rectifiable curves are close to affine on infinitesimal scales.  Integrals against measures of the form $\ud x\;\frac{\ud r}{r^n}$ are known as Carleson integrals.  

Jones, in \cite{JonesSquare}, used the quantitative geometric information provided by \eqref{e:carleson} to give another proof of the $L_2$--boundedness of the Cauchy transform on $1$\nobreakdash--dimensional Lipschitz graphs. Since Jones's work, $\beta$--numbers and their variants have appeared frequently in harmonic analysis, geometric measure theory, and related fields. In particular, Theorem~\ref{tstj} has been generalized to various other spaces and settings \cite{Oki, SchulTSP, LiSchul, LiSchul2, FFP, CLZ,Li-strat, DavidSchul}. Moreover, the introduction of Carleson integrals of $\beta$--numbers was a starting point for the theory of quantitative rectifiability of David and Semmes \cite{DS1,DS2}. Briefly, this theory provides several characterizations of Ahlfors regular sets of $\R^n$ satisfying a Carleson bound similar to \eqref{e:carleson} via a suite of geometric and analytic conditions. Notably, it serves as a broad geometric framework for singular integrals acting on lower dimensional subsets of $\R^n$.




One of the settings where generalizations of the traveling salesman theorem have been studied is the setting of Carnot groups \cite{LiSchul, LiSchul2, FFP, CLZ,Li-strat}.  The Carnot groups are a class of nilpotent Lie groups whose abelian members are precisely Euclidean spaces.  Thus, they can be viewed as generalizations of $\R^n$.  The simplest examples of nonabelian Carnot groups are the Heisenberg groups $\H_n$. Extending Theorem~\ref{tstj} and other aspects of quantitative rectifiability in Carnot groups contributes to the development of geometric measure theory on these sub-Riemannian spaces.  For a recent overview of this research program, which started about 20 years ago, we refer the reader to the lecture notes \cite{SCnotes}.

An interesting challenge for researchers in sub-Riemannian geometric measure theory is to develop a robust theory of sub-Riemannian rectifiability. For example, the attempt to define rectifiability in the $(2n+1)$--dimensional Heisenberg group $\H_n$ modeled after Federer's classical definition using Lipschitz images is problematic; Ambrosio and Kirchheim \cite{AK} proved that a Lipschitz image $f(\R^k )\subset \H_n$ has zero $k$--dimensional Hausdorff measure for $n+1 \leq k \leq 2n$.  This indicates that the notion of rectifiability changes drastically between the low-dimensional and the low-codimensional case. 

Franchi, Serapioni and Serra-Cassano  \cite{FSS1} introduced intrinsic Lipschitz graphs in order to define rectifiable sets of codimension 1 in $\H_n$. These are sets satisfying a cone condition, similar to the one satisfied by Euclidean Lipschitz graphs.   The cone condition will be defined in the next section, but a notable class of examples is given by the fact that level sets of $C^1$ functions are locally intrinsic Lipschitz graphs away from critical points. Furthermore, just as Rademacher's theorem implies that Lipschitz graphs in $\R^n$ can be approximated by planes almost everywhere, intrinsic Lipschitz graphs can be locally approximated by vertical planes almost everywhere \cite{FSSCDiff}. The approach initiated in \cite{FSS1} has been quite successful, as several important subsequent contributions \cite{MSS,FSSCDiff} laid down the foundations for a meaningful theory of low-codimensional rectifiability. 

Recently, intrinsic Lipschitz graphs have also been used to study quantitative rectifiability. This approach was introduced independently in \cite{CFO1,NY1}, which give quantitative bounds on how close intrinsic Lipschitz graphs are to vertical planes and vertical sets. See also \cite{NY2,FORig,Rig} for some related recent results. One motivation for this work is that, as in the Euclidean case, the quantitative rectifiability of intrinsic Lipschitz graphs is related to the behavior of a natural singular integral arising in the study of removability for Lipschitz harmonic functions in $\H_n$, see \cite{CFO2,FORiesz}. 

In this paper, we bound the  quantitative rectifiability of intrinsic Lipschitz graphs by proving bounds on the $\beta$--numbers of intrinsic Lipschitz graphs analogous to the bounds in Theorem~\ref{tstj}.
Given $E \subset \H_n$ (typically an intrinsic Lipschitz graph) and \(x\in \H_n\), we define the codimension--1 version of the $\beta$--numbers as
\begin{equation}\label{eq:def beta} \beta_E(x,r)=\inf_{L \in \mathsf{VP}} \left[ r^{-2n-1} \int_{B(x,r)\cap E} \left(\frac{d(y,L)}{r}\right)^2  \; \ud  \mathcal{H}^{2n+1}(y)\right]^{1/2}
\end{equation}
where in the infimum, $L$ ranges over the set of vertical planes (planes containing the center) in $\H_n$.  This is scale-invariant in the sense that $\beta_{\delta_t(E)}(\delta_t(x),tr)=\beta_{E}(x,r)$ for any $t>0$. 

Codimension--1 $\beta$--numbers in the Heisenberg group were previously considered in \cite{CFO1}, where it was shown that a weaker qualitative analogue of \eqref{e:carleson}, known as the
weak geometric lemma, holds for intrinsic Lipschitz graphs in $\H_n$. This result was written only for $\H_1$ in \cite{CFO1}, but the proof extends to all $\H_n$. In this paper we will show that intrinsic Lipschitz graphs in $\H_n$, for $n \geq 2$, satisfy a local version of \eqref{e:carleson} known as the (strong) geometric lemma. We note that the (strong) geometric lemma implies the weak geometric lemma, see \cite[Part I, 1.1.5]{DS2} for a relevant discussion in Euclidean spaces. 

Our main result is the following theorem.
\begin{thm}\label{thm:beta2 highdim}
  Let $n\ge 2$ and let $\Gamma$ be an intrinsic $\lambda$--Lipschitz graph in $\H_n$.  Then, for any $R>0$ and any ball $B=B(y,R)\subset \H_n$,
  \begin{align}
  \int_0^R \int_{B \cap \Gamma} \beta_{\Gamma}(x,r)^2 \ud \mathcal{H}^{2n+1}(x) \frac{\ud r}{r} \lesssim_{\lambda} R^{2n+1}. \label{e:main}
  \end{align}
\end{thm}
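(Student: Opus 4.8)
The plan is to transfer \eqref{e:main} to a Carleson estimate for the function defining $\Gamma$, invoke the Dorronsoro inequality of \cite{FO}, and then run a slicing argument that upgrades its conclusion -- approximation by arbitrary Euclidean affine maps -- to the approximation by vertical planes that $\beta_{\Gamma}$ requires.

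First I would write $\H_n=W\cdot V$ with $V$ a horizontal line and $W$ the complementary vertical subgroup, a Carnot group of homogeneous dimension $2n+1$ isomorphic to $\H_{n-1}\times\R$, so that $\Gamma=\{w\cdot\phi(w):w\in W\}$ for an intrinsic $\lambda$--Lipschitz function $\phi\from W\to\R$, with the surface measure $\cH^{2n+1}$ on $\Gamma$ corresponding under $w\mapsto w\phi(w)$ to a measure comparable to Lebesgue measure on $W$. Two facts then bring \eqref{e:main} down to a statement about $\phi$: because $V$ is horizontal, the $\H_n$--distance from $w\phi(w)$ to the graph of a function $\ell\from W\to\R$ is bounded, near $w$, by a multiple of $|\phi(w)-\ell(w)|$, with no metric distortion; and the vertical planes through a point that are intrinsic graphs over $W$ are exactly the graphs of those $\ell$ that are Euclidean affine in the $2n-1$ non-central coordinates of $W$ and constant in the central coordinate $t$. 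Restricting the infimum defining $\beta_{\Gamma}$ to such planes only increases it, so \eqref{e:main} will follow once we prove
\begin{equation}\label{e:phi-carleson}
\int_0^R\int_{B_W}\Big(\inf_{\ell}\,r^{-2n-3}\int_{B_W(w,r)}|\phi(w')-\ell(w')|^2\ud w'\Big)\ud w\,\frac{\ud r}{r}\lesssim_{\lambda} R^{2n+1},
\end{equation}
where $B_W\subset W$ is the ball corresponding to $B$, $B_W(w,r)$ denotes the ball of $W$ of radius $r$ about $w$, and $\ell$ ranges over the intrinsic linear functions just described.

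I would then invoke the Dorronsoro inequality of \cite{FO}, which yields an estimate of the form \eqref{e:phi-carleson} except that $\ell$ is allowed to range over all Euclidean affine functions of the $2n$ coordinates of $W$, in particular functions depending on $t$, whose graphs are tilted surfaces rather than planes. The task is to shrink this comparison class to the intrinsic linear one at a controlled cost, and here the hypothesis $n\ge2$ enters. The tool is an auxiliary Euclidean inequality: for a function $g$ on a cube $Q\subset\R^m$, the $\beta$--number of the graph of $g$ over $Q$ is dominated by an average, over the codimension--$1$ coordinate slices of $Q$, of the $\beta$--numbers of the graphs of the restrictions of $g$ to those slices. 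The reason it holds is that once $m\ge3$ a function whose restriction to every coordinate hyperplane is affine has all pure and mixed second partials equal to zero, hence is affine; the quantitative version comes from assembling a single affine comparison map out of the slicewise ones and estimating the discrepancy, which I expect must be done scale by scale, matching $\beta_g$ at scale $r$ against the slice $\beta$--numbers over a range of comparable scales. Since $\dim W=2n\ge4$ when $n\ge2$, the inequality applies on $W$, and the coordinate slicing meshes with the group structure: on the slices where $t$ is held fixed -- copies of $\R^{2n-1}$ -- intrinsic linear functions restrict to arbitrary affine functions, so the class delivered by \cite{FO} is already the right one there and no Burgers--type nonlinearity survives; on the slices where a horizontal coordinate is held fixed -- pieces of intrinsic Lipschitz graphs in lower Heisenberg groups -- one iterates the scheme, invoking the lower-dimensional instances of \cite{FO} and, where the recursion exposes purely vertical directions, the known Carleson estimates for the vertical oscillation of intrinsic Lipschitz graphs. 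Controlling the slice $\beta$--numbers this way, applying Fubini in the scale variable, and reassembling via the Euclidean inequality will give \eqref{e:phi-carleson}.

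The step I expect to be hardest is establishing and correctly phrasing the Euclidean slicing inequality -- especially in the ``relative'' form in which the comparison class is the subspace of intrinsic linear functions rather than all affine functions -- and then fitting it to the Heisenberg picture: the slicing must be arranged so that the affine map recovered from the slices stays within the intrinsic linear class, i.e.\ so that no dependence on $t$ re--enters; the variation of the slicewise best approximations between nearby slices must be controlled, a telescoping estimate over scales where a careless bound would spoil the power of $R$; and one must verify that restricting an intrinsic $\lambda$--Lipschitz function to coordinate slices again produces intrinsic Lipschitz functions with constant $\lesssim_{\lambda}1$. Underneath all of this lies the measure--theoretic bookkeeping that relates Heisenberg balls on $\Gamma$, Heisenberg balls in $W$, and the Euclidean cubes adapted to the slicing -- delicate because $\Gamma$ is only Ahlfors regular and $\phi$ is merely $1/2$--H\"older in $t$ -- with all comparison constants kept dependent on $\lambda$ and $n$ alone.
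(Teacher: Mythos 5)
Your high-level strategy—reduce to a parametric Carleson estimate, prove a Euclidean inequality bounding a function's $\beta$-number by those of its codimension-1 slices, and exploit $n\ge 2$ to get enough slicing dimensions—matches the paper's skeleton, and the Euclidean slicing inequality you describe is indeed the paper's Proposition~\ref{prop:slicing cube}. But there are two linked misconceptions at the center of your plan that would prevent it from working as stated.

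First, you propose to invoke the Dorronsoro inequality of \cite{FO} \emph{directly on $W=V_0$}, obtaining an estimate in which $\ell$ ranges over all Euclidean affine functions (including $t$-dependent ones), and then to ``shrink the comparison class.'' This fails for two reasons. (a) The comparison class in \cite{FO} is already the right one: ``affine'' in the Heisenberg sense means $\tau\circ\pi$, i.e.\ constant in the central coordinate, so there is no $t$-dependence to excise, and no shrinking problem exists. (b) More seriously, the Fässler--Orponen theorem applies to \emph{Lipschitz} functions on a Heisenberg group. The parametrizing function $\phi$ of an intrinsic Lipschitz graph is \emph{not} Lipschitz on $V_0$; the cone condition is nonlinear and only forces $\phi$ to be Lipschitz on cosets of the vertical hyperplane $P_w$ (Lemma~\ref{lem:nearby slices}). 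So \cite{FO} cannot be applied on $W$ at all—it is applied in the paper only on the $\H_{n-1}$-isomorphic slices $uP_w$, where the restriction actually is Lipschitz. This is the real reason slicing is needed, not a mismatch of comparison classes.

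Second, your slicing mixes $t$-fixed slices (copies of $\R^{2n-1}$) with horizontal-coordinate-fixed slices. The paper uses only \emph{vertical} hyperplanes $P_1,\dots,P_{2n-1}$ (each containing $\langle Z\rangle$), so every slice is an $\H_{n-1}$ and the functions on slices are genuinely Heisenberg-Lipschitz; the relevant dimension count is $2n-1\ge 3$, i.e.\ $n\ge 2$, not $\dim W=2n\ge 4$. This is where a subtlety you do not mention becomes essential: because $f_w$ is Lipschitz only on cosets of $P_w$, to slice in the direction of $P_{w'}$ one must \emph{re-parametrize} $\Gamma$ as $\Gamma_{f_{w'},w'}$ and then compare the slice-affine approximations of $f_w$ and $f_{w'}$. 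That comparison is Lemma~\ref{lem:alpha compare saff}, and without it the contributions from the different slicing directions cannot be reassembled. Your proposed recursion into lower Heisenberg groups and ``vertical oscillation'' estimates is not what the paper does and would not substitute for this re-parametrization step.
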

This generalizes Dorronsoro's Theorem, which proves a similar inequality for Lipschitz graphs in $\R^n$.  More generally, an Ahlfors regular subset of $\R^n$ satisfies a Euclidean analogue of \eqref{e:main} if and only if it is uniformly rectifiable.

We prove Theorem \ref{thm:beta2 highdim} using a slicing technique. Though the intrinsic Lipschitz condition is nonlinear, a codimension--1 slice of an intrinsic Lipschitz graph can be viewed as the graph of a Lipschitz function on a subset of $V_0$ that is isomorphic to $\H_{n-1}$.
Fässler and Orponen \cite{FO} proved a version of Dorronsoro's Theorem for Lipschitz functions on Heisenberg groups which gives bounds analogous to \eqref{e:main}.  
This shows that a version of Theorem~\ref{thm:beta2 highdim} holds on each slice, and we show that if Theorem~\ref{thm:beta2 highdim} holds on a rich enough family of slices of $\Gamma$, then it holds on all of $\Gamma$. A similar slicing argument in the Euclidean setting was used recently in \cite{OR}.

Our slicing technique relies on passing from $\H_n$ to a lower-dimensional Heisenberg group $\H_{n-1}$, which is impossible when $n=1$.  This leaves open the problem of whether Theorem~\ref{thm:beta2 highdim} is true for $\H_1$, which we plan to investigate in future work.  

\begin{roadmap}
In Section~\ref{sec:prelims}, we introduce some basic notation and definition for intrinsic Lipschitz graphs.  To prove Theorem~\ref{thm:beta2 highdim}, we need some results on balls in intrinsic graphs and their projections and a description of codimension--1 slices of intrinsic graphs, which we present in Section~\ref{sec:slices}.  We will also need to pass between the non-parametric $\beta$--numbers defined in \eqref{eq:def beta} (which are defined in terms of a graph $\Gamma$) and parametric $\beta$--numbers (defined in terms of a function $f$).  We prove the necessary lemmas in Section~\ref{sec:comparing}.   
Finally, in Section~\ref{sec:proof of main}, we prove Theorem~\ref{thm:beta2 highdim}, modulo one last lemma that bounds the $\beta$--numbers of a graph in terms of the $\beta$--numbers of its slices, and in Section~\ref{sec:slice affine}, we prove this last lemma.
\end{roadmap}

\section{Preliminaries}\label{sec:prelims}

We define the Heisenberg group $\H_n$ as the Lie group $(\R^{2n+1},\cdot)$ where elements of $\R^{2n+1}$ are written as $(x,y,z)$ for $x,y \in \R^n$ and $z \in \R$.  The group product is defined as
\begin{align*}
  (x,y,z) \cdot (x',y',z') = (x+x',y+y',z+z' + \Omega((x,y),(x',y'))/2)
\end{align*}
where $\Omega((x,y),(x',y')) = \sum_{i=1}^n (x_iy_i' - x_i'y_i)$ is the symplectic form on $\R^{2n}$.  Note that two elements $(x,y,z), (x',y',z') \in \H_n$ do not commute unless $\Omega((x,y),(x',y')) = 0$. We will also use the standard commutator notation $[u,v]=uvu^{-1}v^{-1}$ for $u,v \in \H_n$. The identity element in $\H_n$ is $\zero=(0,0,0)$. 

Let $X_1,\dots, X_n, Y_1,\dots, Y_n, Z$ be the coordinate vectors of $\H_n$ and let $x_1,\dots, x_n$, $y_1,\dots, y_n$, $z\from \H_n\to \R$ be the coordinate functions.
The center of the group is $\langle Z \rangle = \{ (0,0,z) : z \in \R\}$.  A subgroup $W\subset \H_n$ is \emph{vertical} if it contains the center $\langle Z\rangle$.
An element $w \in \H_n$ is said to be a {\it horizontal vector} if $z(w) = 0$, and we let $A$ be the set of horizontal vectors.  Let $d$ be the Carnot--Carathéodory metric on $\H_n$. For any $h \in \H_n$ we let $\|h\|=d(\zero,h)$.

We define a family of automorphisms
\begin{align*}
  \delta_t \from  \H_n &\to \H_n \\
  (x,y,z) &\mapsto (t x, t y, t^2 z)
\end{align*}
for $t\in \R$.
The $\delta_t$ dilate the metric in that for any $g,h \in \H_n$, we have
\begin{align*}
  d(\delta_t(g),\delta_t(h)) = |t| d(g,h).
\end{align*}
For any horizontal vector $w \in \H_n$ and $\alpha \in \R$, we define $w^\alpha = \delta_\alpha(w)$; when $\alpha\in \Z$, this agrees with the usual notion of exponentiation.

The projection
\begin{align*}
  \pi \from  \H_n &\to \R^{2n} \\
  (x,y,z) &\mapsto (x,y)
\end{align*}
is a Lipschitz homomorphism.   A function $f \from  \H_n \to \R^k$ is \emph{affine} if there is an affine function $\tau \from  \R^{2n} \to \R^k$ so that $f = \tau \circ \pi$.

A \emph{vertical plane} $V$ is a subset $V=\pi^{-1}(P)\subset \H_n$ where $P \subset \R^{2n}$ is a $(2n-1)$--dimensional affine plane.  One particular vertical plane, which we'll use very often is $V_0 = \{y_n = 0\}$. A function $f\from V_0\to \R$ is \emph{vertical} if it is constant on cosets of $\langle Z\rangle$. We will denote by $\Vert$ the set of vertical functions on $V_0$, and by $\Aff$ the set of vertical affine functions on $V_0$; i.e., restrictions to $V_0$ of affine function $\H_n \to \R$.  Note that these functions are of the form
$$f(v)=\sum_{i=1}^n \alpha_i x_i(v) + \sum_{i=1}^{n-1} \beta_i y_i(v) + \gamma.$$

Let $w \in A$ be a horizontal vector such that $y_n(w)=1$, we let $\Pi_w\from \H_n\to V_0$ be the map such that $\Pi_w(h)$ is the unique point of intersection of the coset \(h\langle w\rangle\) and the vertical plane \(V_0\).  This map projects  $\H_n$ to $V_0$ along cosets of $\langle w\rangle$, and we have $\Pi_w(h)=hw^{-y_n(h)}$.
For any $g, h\in \H_n$,
\begin{equation}\label{eq:Pi invariance}
\Pi_w(gh)=gh\cdot w^{-y_n(g)-y_n(h)}=g\left(h \cdot w^{-y_n(h)}\right)\cdot w^{-y_n(g)} = \Pi_w(g\Pi_w(h)).
\end{equation}

For any function $f\from V_0\to \R$ and any horizontal vector $w$ with $y_n(w)=1$, we define the $w$--intrinsic graph of $f$ as
$$\Gamma_{f,w}=\{v w^{f(v)} : v\in V_0\}.$$
For $0<\lambda<1$, let $\Cone_\lambda$ be the open double cone
\begin{align*}
  \Cone_\lambda = \{p\in \H_n : \lambda d(\mathbf{0},p) < |y_n(p)|\}.
\end{align*}
We say that a subset $\Gamma\subset \H_n$ is an \emph{intrinsic $\lambda$--Lipschitz graph} if for every $x\in \Gamma$, $(x\cdot \Cone_\lambda) \cap \Gamma=\emptyset$.  For any such $\Gamma$ and any $w\in \Cone_\lambda$ with $y_n(w)=1$, the restriction $\Pi_w|_{\Gamma}$ is injective, so we can define a function $f_w\from \Pi_w(\Gamma)\to \R$, $f_w(\Pi_w(p))=y_n(p)$ such that $\Gamma=\Gamma_{f_w,w}$.  Conversely, if $\Gamma_{f, w}$ is an intrinsic $\lambda$--Lipschitz graph, we say that $f$ is an \emph{intrinsic $(w,\lambda)$--Lipschitz function}.

We finally record that that the Hausdorff (2n+1)-measure $\cH^{2n+1}$ is an Ahlfors (2n+1)-regular measure when restricted to an intrinsic $\lambda$--Lipschitz graph $\Gamma$, see e.g. \cite[Theorem 3.9]{FS}.  That is, there exists a constant $C > 1$ depending only on $\lambda$ so that
\begin{align*}
  C^{-1}r^{2n+1} \leq \cH^{2n+1}(B(x,r) \cap \Gamma) \leq Cr^{2n+1}, \qquad \forall x \in \Gamma, r > 0.
\end{align*}

\section{Projections and slices of intrinsic Lipschitz graphs}\label{sec:slices}

We will sometimes need to pass between balls in $\H_n$ and their projections to $V_0$, which can be highly distorted.  In this section, we define some quasiballs that will make this more convenient.  Moreover, we will introduce a slicing method in order to obtain a family of decompositions of an intrinsic Lipschitz graph $\Gamma$ into graphs of real valued Lipschitz functions with domain $\H_{n-1}$.

We start with some notation. For any subspace $S\subset A$, let $S^\Omega$ be the symplectic complement of $S$, i.e., 
$$S^\Omega=\{v\in A : \Omega(v,s)=0 \text{ for all }s\in S\}.$$
The subspace $S\subset A$ is called symplectic if $S \cap S^\Omega= \emptyset$. In a similar manner, for any subspace $S \subset A$ we define
$$S^{\overline{\Omega}}= \{h\in \H_n: \overline{\Omega}(h,s)=0 \mbox{ for all }s \in S\},$$
where $\overline{\Omega}\from \H_n\times \H_n\to \R$ is the the alternating form $\overline{\Omega}(g,h)=\Omega(\pi(g),\pi(h))$.  Note that $[g,h]=\overline{\Omega}(g,h)Z$ for all $g,h \in \H_n$.



 Slightly abusing notation, for a vector $w\in A$ we let $$v^\Omega=\langle w\rangle^\Omega  \mbox{ and }v^{\overline{\Omega}}=\langle w\rangle^{\overline{\Omega}} =\pi^{-1}(w^\Omega).$$ Note that since $\Omega(w,w)=0$, we have $w\in w^\Omega$. It is well known that for any subspace $S \subset A$ it holds that $\dim S+\dim S^\Omega=2n$, see e.g. \cite[Chapter 1]{berndt}.
Hence, for every horizontal vector $w$ such that $y_n(w)=1$, the complement $w^{\Omega}$ is a $(2n-1)$--dimensional horizontal subspace that contains $w$. For such $w$, we let $C_w=V_0\cap w^{\Omega}$; this is a $(2n-2)$--dimensional horizontal subspace of $V_0$.  Note that $C_w$ does not uniquely determine $w$, since we have $C_w=C_{w+tX_n}$ for any $t\in \R$.  Let also 
$$P_w=V_0\cap w^{\overline{\Omega}}=C_w+\langle Z\rangle,$$ and observe that this is a $(2n-1)$--dimensional vertical subspace of $V_0$.

Let $w$ and $C_w$ be as above and let $\nu\in A\cap V_0$ be a horizontal unit vector orthogonal to $C_w$; this is unique up to sign.  Let
$$R_w = \{ s \nu + p + t Z \in V_0 : p \in B_{C_w}(\mathbf{0},1),|s|\le 1, |t|\le 1\},$$
where $B_{C_w}(\mathbf{0},1)$ is the unit ball in $C_w$.  For any $g \in \H_n$ and any $r > 0$, we define
$$Q_w(g,r)=\Pi_w(g\delta_r(R_w)).$$
Since $R_w\subset V_0$, when $g=\zero$, we simply have $Q_w(\zero,r)=\delta_r(R_w)$.  Note that the map $x\mapsto \Pi_w(gx)$ preserves Lebesgue measure on $V_0$, so if $\mu$ is Lebesgue measure, then
\begin{equation}\label{eq:measure of Q}
  \mu(Q_w(g,r))=\mu(\delta_r(R_w))\approx r^{2n+1}.
\end{equation}

We now recall the definition of quasiballs. Let $(X, \rho)$ be a metric space and let $\lambda \geq 1$.  Recall that a $\lambda$--quasiball (or simply a quasiball if $\lambda$ is understood) is any set $E \subset X$ for which there exist $x \in X$ and  $R > 0$ for which
\begin{align*}
  B_{\rho}(x,R) \subseteq E \subseteq B_{\rho}(x,\lambda R).
\end{align*}
In the next lemma we'll show that when we slice $Q_w(g,r)$ with cosets of $P_w$ we obtain quasiballs. 
\begin{lemma}\label{lem:slices are quasi}
 Let $w \in A$ with $y_n(w)=1$. For any $g \in \H_n$, any $r > 0$, and any $u\in V_0$, the intersection $Q_w(g,r) \cap u P_w$ is either a quasiball of radius $\approx r$ or empty.
\end{lemma}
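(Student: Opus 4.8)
The statement is affine-geometric in disguise, so the plan is to reduce it to a statement about convex sets in $\R^{2n+1}$ by unwinding the definitions of $Q_w(g,r)$, $P_w$, and $\Pi_w$, and then appeal to the dilation and left-translation structure. First I would normalize: since $Q_w(g,r) = \Pi_w(g\delta_r(R_w))$ and the left-translation–then–$\Pi_w$ map preserves both Lebesgue measure and cosets of vertical subgroups (by \eqref{eq:Pi invariance}), it is enough to understand the geometry of $\delta_r(R_w) \cap u'P_w$ for the appropriate translated slice coset $u'P_w$, and then by the scaling relation $d(\delta_r(g),\delta_r(h)) = r\,d(g,h)$ it suffices to treat $r=1$, i.e.\ to show $R_w \cap u P_w$ is either empty or a quasiball of radius $\approx 1$.

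Next I would make the set $R_w$ explicit. By construction $R_w = \{s\nu + p + tZ : p \in B_{C_w}(\zero,1), |s|\le 1, |t|\le 1\}$, where $\nu \in A\cap V_0$ is a unit vector orthogonal to the $(2n-2)$-dimensional horizontal subspace $C_w$, and $P_w = C_w + \langle Z\rangle$. Since the group law on $V_0$ restricted to the abelian-looking $Z$-direction and the purely horizontal directions behaves (additively) like a product structure in these coordinates — $C_w$, $\langle\nu\rangle$ and $\langle Z\rangle$ together span $V_0$ as a vector space, with $\langle Z\rangle$ central — the coset $uP_w$ is obtained from $P_w$ by a fixed shift in the $\nu$-direction (the $C_w$ and $Z$ parts of $u$ get absorbed into $P_w$). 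So $R_w \cap uP_w$ is, up to the shift, the slice of $R_w$ at a fixed value $s = s_0$ of the $\nu$-coordinate: it is empty when $|s_0| > 1$, and otherwise equals $\{s_0\nu + p + tZ : p\in B_{C_w}(\zero,1), |t|\le 1\}$ (translated), which is a ``box'' of bounded eccentricity in the $(2n-1)$ directions of $P_w$.

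The remaining point is that such a box, viewed inside $uP_w$ with the inherited Carnot–Carathéodory metric, is a $\lambda$-quasiball of radius $\approx 1$. For this I would invoke the standard ball–box estimate in the Heisenberg group: on the $(2n-1)$-dimensional vertical subgroup $P_w \cong \H_{n-1}$ (a lower Heisenberg group, via the identification mentioned in the introduction, with the $\langle Z\rangle$-direction carrying the weight-2 scaling), a set that in exponential coordinates contains a box $[-c,c]^{2n-2}\times[-c^2,c^2]$ and is contained in $[-C,C]^{2n-2}\times[-C^2,C^2]$ is sandwiched between two metric balls of comparable radii. Our box has the horizontal directions of size $\Theta(1)$ and the central direction of size $\Theta(1)$ as well, which matches the ball–box scaling since $1 \approx 1^2$; hence it is a quasiball of radius $\approx 1$, with quasiball constant depending only on $n$ (through the fixed linear data $C_w$, $\nu$, $Z$). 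Undoing the dilation multiplies the radius by $r$, and undoing the measure-preserving $\Pi_w\circ(g\cdot)$ map preserves the metric structure of cosets up to the same controlled distortion, giving radius $\approx r$ in general.

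The main obstacle is the last step: one must check carefully that the CC-metric on the coset $uP_w$ — equivalently, the restriction of the ambient $\H_n$ metric to a vertical $(2n-1)$-plane — is biLipschitz to the intrinsic $\H_{n-1}$ metric with uniform constants, so that ``box $\Rightarrow$ quasiball'' holds with a constant independent of $w$, $g$, $u$, $r$. This is where the rigidity of the vertical-plane structure and the homogeneity of the CC metric do the work, but it requires either citing a ball–box lemma in the form adapted to vertical subgroups or proving the two-sided containment $B(\zero,c) \subseteq R_w\cap uP_w \subseteq B(\zero,C)$ by hand using horizontal curves; the upper containment is routine (short concatenations of horizontal segments stay in the box), while the lower containment (every point of the box is reachable by a short horizontal path) is the standard, slightly delicate ``filling the center'' argument.
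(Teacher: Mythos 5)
Your treatment of the $g=\zero$ case and the dilation is essentially the paper's: slice $R_w$ along cosets of $P_w$ to get a ``box'' $\{a\nu + p + tZ : p \in B_{C_w}(\zero,1), |t|\le 1\}$ (or $\emptyset$), observe that its dimensions match the ball--box scaling of $P_w \cong \H_{n-1}$ (horizontal $\approx 1$, central $\approx 1$), and note that $\delta_r$ scales the quasiball radius linearly. The paper states this step without elaboration, so your concern in the final paragraph about a careful ball--box argument, while not unreasonable, is not where the substance of the lemma lies.

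The genuine gap is in the step you dismiss in one clause: ``undoing the measure-preserving $\Pi_w\circ(g\cdot)$ map preserves the metric structure of cosets up to the same controlled distortion.'' That claim is false for the ambient map $x\mapsto \Pi_w(gx)$ on $V_0$: it is a shear whose metric distortion grows without bound with $g$, and measure preservation gives you nothing metric. What rescues the argument --- and is exactly the point of the paper's proof --- is an algebraic identity specific to $P_w$: a slice $B \subset g\nu^sP_w$ has the property that $\overline{\Omega}(b,w)$, hence the commutator $[b,w^\alpha]$, is \emph{constant} over $b\in B$, because $P_w \subset w^{\overline{\Omega}}$. Writing $\Pi_w(gb) = gbw^\alpha = g[b,w^\alpha]w^\alpha b$, this constancy turns the shear into a single left-translation $\Pi_w(gB) = gZ^{s\alpha\Omega(\nu,w)}w^\alpha B$, which is an isometry of the CC metric. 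For a slice in a coset of a generic vertical hyperplane not contained in $w^{\overline{\Omega}}$, the commutator term varies across the slice and the map is not even biLipschitz with a uniform constant. So the fact that $P_w$ is the symplectic ($\overline{\Omega}$) complement of $w$ is not incidental bookkeeping --- it is the mechanism that makes the slicing compatible with the projection $\Pi_w$, and your proof needs to surface this computation explicitly before the conclusion ``radius $\approx r$ in general'' is justified.
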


\begin{proof}
  Let $R_w$ and $\nu$ be as above.
  We first consider the case $g=\zero$. 
  For $a\in \R$, let $D_{a}=R_w \cap \nu^a P_w$.
  When $|a|\le 1$, 
  $$D_{a}=\{a\nu + p + t Z : p \in B_{C_w}(\mathbf{0},1), |t|\le 1\}$$
  is a quasiball of radius $\approx 1$ in $\nu^a P_w$.  Otherwise, when $|a|> 1$,  $D_{a}=\emptyset$.
  
  Every coset $u P_w$ can be written $\nu^s P_w$ for some $s\in \R$, and
  $$Q_w(\zero,r)\cap \nu^s P_w=\delta_r(R_w \cap \nu^{\frac{s}{r}} P_w)=\delta_r(D_{\frac{s}{r}}).$$
  This is either a quasiball of radius $\approx r$ or the empty set.
  
  Suppose $g\in \H_n$ and $s\in \R$.  Then
  \begin{equation*}
 \begin{split}
  Q_w(g,r)\cap g\nu^s P_w &=
  \Pi_w(g Q_w(\zero,r) \cap g\nu^s \Span(P_w,w)) \\ &=
  \Pi_w(g \delta_r(D_{\frac{s}{r}})) = \delta_r(\Pi_w(\delta_{r^{-1}}(g) D_{\frac{s}{r}})).
\end{split}
  \end{equation*}
  
  We claim that $\Pi_w(\delta_{r^{-1}}(g) D_{\frac{s}{r}})$ is isometric to $D_{\frac{s}{r}}$.  It suffices to consider the case $r=1$.
  
  Let $B=D_{s}$ and let $\alpha=-y_n(g)$.  Since $B\subset V_0$, we have $y_n(gb)=y_n(g)$ for all $b\in B$, so $\Pi_w(gb)=gbw^{\alpha}$.  Since $B\subset \nu^s P_w=s\nu+P_w$ and $\overline{\Omega}(w,P_w)=0$, we have $\overline{\Omega}(b, w)= \overline{\Omega}(s\nu, w)$.  That is, $[b,w]$ is independent of the choice of $b$.
  Therefore,
  \begin{align*}
    \Pi_w(g b) = g b w^\alpha = g [b,w^\alpha] w^\alpha b = g Z^{\alpha \Omega(b, w)} w^\alpha b = g Z^{s \alpha \Omega(\nu, w)} w^\alpha b.
  \end{align*}
  Therefore, $\Pi_w(gB) = g Z^{s\alpha\Omega(\nu, w)} w^\alpha B$ is a left-translate of $B$, so $\Pi_w(gB)$ is a quasiball  of radius $\approx r$ in $g\nu^s P_w$.
\end{proof}

Let $\Gamma$ be an intrinsic Lipschitz graph. When $g \in \Gamma$ is large, $Q_w(g,r)$ can be highly distorted, but the following lemma shows that $Q_w(g,r)$ is the projection of a quasiball with respect to the induced metric on $\Gamma$. Using this property, we'll show that the pushforward measure of $\cH^{2n+1}|_\Gamma$ by $\Pi_w$ is globally equivalent to the Lebesgue measure on $V_0$.

\begin{lemma} \label{l:quasi-Q}
Let $0<\lambda<\lambda'<1$.  There is a $c>0$ depending on $\lambda$ and $\lambda'$ such that for any intrinsic $\lambda$--Lipschitz graph $\Gamma$ such that $\Pi_{Y_n}(\Gamma)=V_0$, any $g\in \Gamma$, any $r>0$, and any horizontal vector $w\in A\cap \Cone_{\lambda'}$ such that $y_n(w)=1$:
\begin{enumerate}
\item \label{l:quasi-Q1} There is a function $f_w\from V_0\to \R$ such that $\Gamma=\Gamma_{f_w,w}$.
\item \label{l:quasi-Q2}
$Q_w(g,c^{-1}r)\subset \Pi_w(B(g,r)\cap \Gamma)\subset Q_w(g,cr).$
\item \label{l:quasi-Q3} Let $\mu$ be Lebesgue measure on $V_0$.  Then $\mu\approx (\Pi_w)_*(\cH^{2n+1}|_\Gamma)$.
\end{enumerate}
\end{lemma}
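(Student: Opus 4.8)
The plan is to prove the three parts in order, since \eqref{l:quasi-Q2} uses \eqref{l:quasi-Q1} and \eqref{l:quasi-Q3} uses both.

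\medskip
\noindent\emph{Part \eqref{l:quasi-Q1}.} As $\lambda<\lambda'$ we have $\Cone_{\lambda'}\subseteq\Cone_\lambda$, so $w\in\Cone_\lambda$ and $\Pi_w|_\Gamma$ is injective by the discussion in Section~\ref{sec:prelims}; what remains is $\Pi_w(\Gamma)=V_0$. I would first record that $\Gamma$ is closed in $\H_n$: if $p_k\in\Gamma$ and $p_k\to p$, choose $q\in\Gamma$ with $\Pi_{Y_n}(q)=\Pi_{Y_n}(p)$ (possible since $\Pi_{Y_n}(\Gamma)=V_0$); then $p^{-1}q\in\langle Y_n\rangle$, and $p^{-1}q\in\Cone_\lambda$ unless $p=q$, while $\Cone_\lambda$ is open and $p_k^{-1}q\to p^{-1}q$, so the cone condition at the $p_k$ forces $p^{-1}q\notin\Cone_\lambda$, hence $p=q\in\Gamma$. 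Next, for $p\in\Gamma$ the cone condition at a fixed $p_0\in\Gamma$ gives $|y_n(p)|\le|y_n(p_0)|+\lambda\,d(p,p_0)\le\lambda\|p\|+C(p_0)$, and since $y_n(w)=1$ and $w\in\Cone_{\lambda'}$ force $\|w\|\le{\lambda'}^{-1}$,
\[
\|\Pi_w(p)\|=\bigl\|p\,w^{-y_n(p)}\bigr\|\ \ge\ \|p\|-|y_n(p)|\,\|w\|\ \ge\ \Bigl(1-\tfrac{\lambda}{\lambda'}\Bigr)\|p\|-\tfrac{C(p_0)}{\lambda'} .
\]
Since $1-\lambda/\lambda'>0$, $\Pi_w|_\Gamma$ is proper; applying this with $w=Y_n$ (where $1-\lambda>0$) shows $\Pi_{Y_n}|_\Gamma$ is a proper continuous bijection onto $V_0$, hence a homeomorphism, so $\Gamma$ is a $2n$--manifold homeomorphic to $\R^{2n}$. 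Then $\Pi_w|_\Gamma$ is a proper continuous injection of $2n$--manifolds, so its image is open (invariance of domain) and closed (properness); as $V_0$ is connected, $\Pi_w(\Gamma)=V_0$, and $f_w$ is defined on all of $V_0$.

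\medskip
\noindent\emph{Part \eqref{l:quasi-Q2}.} By \eqref{eq:Pi invariance}, the map $x\mapsto\Pi_w(gx)$ sends $\delta_\rho(R_w)$ to $Q_w(g,\rho)$ and sends $\Pi_w\bigl(B(\zero,r)\cap g^{-1}\Gamma\bigr)$ to $\Pi_w\bigl(g(B(\zero,r)\cap g^{-1}\Gamma)\bigr)=\Pi_w(B(g,r)\cap\Gamma)$, and it respects inclusions; moreover $g^{-1}\Gamma$ is again an intrinsic $\lambda$--Lipschitz graph containing $\zero$ with $\Pi_{Y_n}(g^{-1}\Gamma)=V_0$. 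Hence it suffices to treat $g=\zero$, and a further application of $\delta_{r^{-1}}$, which commutes with $\Pi_w$, reduces to $r=1$. So let $\zero\in\Gamma$ and write $p=v\,w^{f_w(v)}$ for $p\in\Gamma$. The cone condition at $\zero$ gives $|f_w(v)|=|y_n(p)|\le\lambda\|p\|$, whence $\bigl(1-\lambda\|w\|\bigr)\|p\|\le\|\Pi_w(p)\|=\|v\|\le\bigl(1+\lambda\|w\|\bigr)\|p\|$ with $\lambda\|w\|\le\lambda/\lambda'<1$, so
\[
B_{V_0}\bigl(\zero,\,1-\lambda\|w\|\bigr)\ \subseteq\ \Pi_w\bigl(B(\zero,1)\cap\Gamma\bigr)\ \subseteq\ B_{V_0}\bigl(\zero,\,1+\lambda\|w\|\bigr),
\]
the left inclusion using Part \eqref{l:quasi-Q1} to guarantee $v\,w^{f_w(v)}\in\Gamma$ for every $v$. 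Finally, since $C_w\oplus\langle\nu\rangle$ is an orthogonal splitting of the horizontal subspace of $V_0$, it is elementary that $R_w$ is a quasiball of radius $\approx1$ in $V_0$ with constants uniform over $\{w\in A:y_n(w)=1,\ \|w\|\le{\lambda'}^{-1}\}$; comparing it with the balls $B_{V_0}(\zero,1\pm\lambda\|w\|)$ produces a constant $c=c(\lambda,\lambda')$ with $\delta_{c^{-1}}(R_w)\subseteq\Pi_w(B(\zero,1)\cap\Gamma)\subseteq\delta_{c}(R_w)$, and rescaling and translating back finish the proof.

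\medskip
\noindent\emph{Part \eqref{l:quasi-Q3}.} Since $\Pi_w|_\Gamma$ is a homeomorphism onto $V_0$, $m_2(S):=\mu(\Pi_w(S))$ is a Radon measure on $\Gamma$ with $(\Pi_w)_*m_2=\mu$, and with $m_1:=\cH^{2n+1}|_\Gamma$ one has $(\Pi_w)_*m_1=(\Pi_w)_*(\cH^{2n+1}|_\Gamma)$. For $g\in\Gamma$ and $\rho>0$, Part \eqref{l:quasi-Q2} and \eqref{eq:measure of Q} give $m_2(B(g,\rho)\cap\Gamma)=\mu(\Pi_w(B(g,\rho)\cap\Gamma))\approx\rho^{2n+1}$, while the Ahlfors regularity recalled in Section~\ref{sec:prelims} gives $m_1(B(g,\rho)\cap\Gamma)\approx\rho^{2n+1}$. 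Thus $m_1$ and $m_2$ are comparable on all balls of the doubling space $(\Gamma,d)$, and a standard Vitali argument upgrades this to $m_1\approx m_2$ as measures: for Borel $A\subseteq\Gamma$ and $\varepsilon>0$, pick open $W\supseteq A$ with $m_2(W)\le m_2(A)+\varepsilon$, apply the Vitali covering theorem (valid in $(\Gamma,d)$) to a fine cover of $A$ by balls contained in $W$ to get a disjoint subfamily $\{B_i\}$ with $m_1(A\setminus\bigcup_i B_i)=0$, and bound $m_1(A)\le\sum_i m_1(B_i)\lesssim\sum_i m_2(B_i)=m_2(\bigsqcup_i B_i)\le m_2(W)\le m_2(A)+\varepsilon$; the reverse inequality is symmetric. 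Pushing forward by $\Pi_w|_\Gamma$ gives $\mu\approx(\Pi_w)_*(\cH^{2n+1}|_\Gamma)$.

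\medskip
\noindent\emph{Main difficulty.} The essential point is the surjectivity in Part \eqref{l:quasi-Q1}: this is precisely where $\lambda<\lambda'$ is used, via $\|w\|\le{\lambda'}^{-1}$ (because $y_n(w)=1$) together with the cone bound $|y_n(p)|\lesssim\|p\|$, which make $\Pi_w|_\Gamma$ proper so that invariance of domain forces its image to be all of $V_0$. The remainder is bookkeeping: the interplay of left translations and dilations with $\Pi_w$ in Part \eqref{l:quasi-Q2}, the Euclidean geometry of $R_w$ inside $V_0$, and the Vitali covering theorem in Part \eqref{l:quasi-Q3}.
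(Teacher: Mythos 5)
Your proof is correct and treats the three parts in the same order as the paper, but parts (1) and (3) use somewhat different machinery. For part (1), the paper uses a separation and intermediate-value argument: since $\Pi_{Y_n}(\Gamma)=V_0$, the graph $\Gamma$ divides $\H_n$ into two components $\Gamma^{\pm}$ containing $\Cone_\lambda^{\pm}$ respectively, the curve $t\mapsto vw^t$ lies in $\Gamma^-$ for $t<-Ld(\zero,v)$ and in $\Gamma^+$ for $t>Ld(\zero,v)$, so it meets $\Gamma$ at some $t_0$ with $|t_0|\le L\,d(\zero,v)$ --- a quantitative bound the paper then reuses in part (2). You instead prove $\Gamma$ is closed, show $\Pi_{Y_n}|_\Gamma$ is a proper continuous bijection (hence a homeomorphism, so $\Gamma\cong\R^{2n}$), and conclude surjectivity of $\Pi_w|_\Gamma$ from invariance of domain plus properness. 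Both arguments are topological and essentially equivalent in depth; yours is a bit more careful (the paper leaves the closedness of $\Gamma$ and the Jordan--Brouwer-type separation implicit), but you then have to recover the quantitative control on $f_w$ in part (2) directly from the cone condition, which you do correctly. Part (2) is essentially the same in both: reduce to $g=\zero$, $r=1$ by the equivariance of $\Pi_w$ under left translation and dilation, and compare $\|\Pi_w(p)\|$ to $\|p\|$ using $\|w\|\le 1/\lambda'$ and $|y_n(p)|\le\lambda\|p\|$. For part (3), the paper converts the ball-wise estimate $\mu(\Pi_w(B(x,r)\cap\Gamma))\approx r^{2n+1}\approx\cH^{2n+1}(B(x,r)\cap\Gamma)$ into a comparison of measures by citing Theorem 2.4.3 of Ambrosio--Tilli, whereas you unpack the same principle with an explicit Vitali covering argument, using doubling of $\cH^{2n+1}|_\Gamma$ and outer regularity of the two Radon measures. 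Both routes are valid; yours is self-contained at the cost of length.
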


\begin{proof}
We start with the proof of \eqref{l:quasi-Q1}.  Let $L=(\frac{1}{\lambda}-\frac{1}{\lambda'})^{-1}>0$.  Since $\Pi_{Y_n}(\Gamma)=V_0$, the complement $\H\setminus \Gamma$ has two connected components, $\Gamma^+$ and $\Gamma^-$.  Since $\zero\in \Gamma$, the double cone $\Cone_\lambda$ is disjoint from $\Gamma$, and we can label its two connected components $\Cone^+_\lambda$ and $\Cone^-_\lambda$  so that $\Cone^\pm_\lambda\subset \Gamma^\pm$.

Let $v\in V_0$.  We claim that there is a unique $t_0(v)=t_0\in \R$ such that $vw^{t_0}\in \Gamma$ and that $|t_0|\le Ld(\zero,v)$.  Suppose that $t> L d(\zero,v)$ and let $h = v w^t$. Note that $h\in \Cone^+_\lambda$. Indeed,  since $w\in \Cone_{\lambda'}$, we have $|y_n(w)|>\lambda' d(\zero,w)=\lambda' \|w\|$, i.e., $\|w\|< (\lambda')^{-1}$.  By the triangle inequality,
$$d(\zero, h) - \frac{y_n(h)}{\lambda} \le d(\zero,v) + t \cdot \|w\| - \frac{t}{\lambda} \le d(\zero,v)+ \frac{t}{\lambda'} - \frac{t}{\lambda} = d(\zero,v)-Lt < 0,$$
so $d(\zero, h)< \frac{y_n(h)}{\lambda}$ and thus $vw^t\in \Cone^+_\lambda$.  Likewise, $vw^{-t}\in \Cone^-_\lambda$.  These points are on different sides of $\Gamma$, so there is some $t_0$ such that $v w^{t_0}\in \Gamma$; in fact, we can take $|t_0|\le L d(\zero,v)$.  Since $w^\alpha\in \Cone_\lambda$ for any $\alpha\ne 0$, this $t_0$ is unique.  Therefore, $\Gamma$ is a $w$--intrinsic graph with $\Pi_w(\Gamma)=V_0$, and we define $f_w\from V_0\to \R$ to be the function such that $v w^{f_w(v)}\in \Gamma$ for all $v\in V_0$. In particular, $f_w(v)=t_0(v)$.

For the proof of \eqref{l:quasi-Q2}, note that by scaling, we may assume $r = 1$. We first prove \eqref{l:quasi-Q2} for $g = \zero$ (so we are assuming $\zero \in \Gamma$). If $p\in B(\zero,1)$, then $|y_n(p)|\le 1$, so 
$$d(\zero,\Pi_w(p)) \le d(\zero,p)+d(p,\Pi_w(p)) \le 1 + |y_n(p)|\cdot \|w\| \lesssim_{\lambda'} 1.$$
Therefore, there is a $C>0$ depending on $\lambda'$ such that $\Pi_w(B(\zero,1))\subset V_0\cap B(\zero,C)$, and if $c$ is sufficiently large (depending only on $\lambda'$), then $\Pi_w(B(\zero,1))\subset Q_w(\zero,c)$.  This proves one inclusion.

We now prove the other inclusion. Note first, that
$$d(\zero,v w^{f_w(v)})\leq d(\zero,v)+\|w\||f_{w}(v)| \leq  \left(1+\frac{L}{\lambda'}\right) d(\zero,v)$$
and consequently
$v w^{f_w(v)}\in \Gamma\cap B(\zero, D d(\zero,v))$ for all $v \in V_0$, where $D=1+\frac{L}{\lambda'}$.  Thus, if $d(\zero,v)<D^{-1}$, then $v\in \Pi_w(B(\zero, 1)\cap \Gamma)$.  If $c$ is sufficiently large (depending only on $\lambda, \lambda'$), then 
$$Q_w(\zero,c^{-1})\subset V_0 \cap B(\zero,D^{-1})\subset \Pi_w(B(\zero, 1)\cap \Gamma),$$
as desired.

Now assume $g \neq \zero$.  Then $g^{-1}\Gamma$ is still an intrinsic Lipschitz graph which now contains $\zero$, so
\begin{align}
    Q_w(\zero,c^{-1}) \subset \Pi_w(B(\zero,1) \cap g^{-1}\Gamma) \subset Q_w(\zero,c). \label{e:0g-contain}
\end{align}

Let $h' \in B(g,1) \cap \Gamma$.  Then $h' = gh$ for some $h \in B(\zero,1) \cap g^{-1}\Gamma$.  Let $y=\Pi_w(h)$; by the upper bound of \eqref{e:0g-contain}, $y \in Q_w(\zero,c) = \delta_c(R_w)$ and $y=hw^\alpha$ for some $\alpha\in \R$.  Then
$gy\langle w\rangle=gh \langle w\rangle=h'\langle w\rangle$, so $\Pi_w(h')=\Pi_w(gy)$ and thus $\Pi_w(h')\in \Pi_w(g\delta_c(R_w))=Q_w(g,c)$. This proves the upper bound of part \ref{l:quasi-Q2}, and the lower bound follows similarly.

To prove the last part, let $\Psi\from V_0\to \Gamma$ be the map $\Psi(v)=v w^{f_w(v)}$, so that $\Psi$ is inverse to $\Pi_w|_\Gamma$.  By \eqref{eq:measure of Q} and part~\ref{l:quasi-Q2}, $\mu(\Pi_w(B(x,r)\cap \Gamma))\approx r^{2n+1}$ for any $x\in \Gamma$.  Thus, by Theorem~2.4.3 of \cite{AT04}, $\Psi_*(\mu)|_\Gamma \approx \cH^{2n+1}|_\Gamma$ and thus $\mu \approx (\Pi_w)_*(\cH^{2n+1}|_\Gamma)$. 
\end{proof}

For every $k\le n$, we identify $\H_k$ with $\Span ( X_1,\dots, X_{k}, Y_1,\dots, Y_{k}, Z )\subset \H_n$.  The vertical subspace $\{y_n=0\}$ is then the internal direct product of $\H_{n-1}$ and $\langle X_n\rangle$.  In fact, every codimension--1 vertical subspace $P\subset \H_n$ is isomorphic to $\H_{n-1}\times \R$, so the cosets of $P$ decompose $\H_n$ into copies of $\H_{n-1}\times \R$.  In the following, we will use these decompositions to construct a family of decompositions of an intrinsic Lipschitz graph $\Gamma$ into a union of graphs of Lipschitz functions $\H_{n-1}\to \R$.

Recall that for every $w\in A$, we defined $P_w=V_0\cap w^{\overline{\Omega}}$; this is a codimension--1 subspace of $V_0$.
Then $P_w\cap \langle w \rangle=\{\mathbf{0}\}$, $P_w\cdot \langle w \rangle=w^{\overline{\Omega}}$, and $w$ and $P_w$ commute, so $w^{\overline{\Omega}}\cong \langle w\rangle \times P_w.$
Furthermore, since $\pi(P_w)=\Span(X_n,w)^\Omega$ and $\Span(X_n,w)$ is a symplectic subspace of $A$, $\pi(P_w)$ is a symplectic subspace and thus $P_w$ is isomorphic to $\H_{n-1}$.

By Lemma~\ref{l:quasi-Q}, if $\Gamma$ is an intrinsic Lipschitz graph, then for every horizontal vector $w\in A$ that is sufficiently close to $Y_n$, there is a function $f_w$ such that $\Gamma=\Gamma_{f_w,w}$.  These functions satisfy a Lipschitz condition on cosets of $P_w$.
\begin{lemma}\label{lem:nearby slices}
  Let $0<\lambda<\lambda'<1$.  There is an $L>0$ such that if \(\Gamma\) is an intrinsic $\lambda$--Lipschitz graph, $w\in A\cap \Cone_{\lambda'}$, $y_n(w)=1$, and $f_w\from V_0 \to \R$ is the parametrizing function of $\Gamma$, i.e.,  \(\Gamma=\Gamma_{f_w,w}\), then for any $g\in V_0$, the restriction $f_w|_{gP_w}$ is $L$--Lipschitz with respect to $d_{\H_n}$.  
\end{lemma}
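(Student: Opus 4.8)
The plan is to reduce the statement to the raw cone condition defining intrinsic Lipschitz graphs, exploiting the fact that $P_w$ was chosen precisely so that it commutes with $w$. Fix $g\in V_0$ and two points $v_1,v_2\in gP_w$, write $t_i=f_w(v_i)$ and $x_i=v_iw^{t_i}$, so that $x_1,x_2\in\Gamma$ and $d(v_1,v_2)=\|v_1^{-1}v_2\|$ by left-invariance of $d$. The first step is the algebraic observation that $p:=v_1^{-1}v_2$ lies in $P_w$ (since $gP_w$ is a coset of the subgroup $P_w$), and since $P_w=V_0\cap w^{\overline{\Omega}}$ we have $\overline{\Omega}(p,w)=0$, hence $[p,w]=\overline{\Omega}(p,w)Z=\zero$; thus $p$ commutes with $w$, and therefore with every power $w^\alpha=\delta_\alpha(w)$. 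Consequently $x_1^{-1}x_2=w^{-t_1}\,v_1^{-1}v_2\,w^{t_2}=w^{-t_1}p\,w^{t_2}=w^{t_2-t_1}p$.

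Next I would feed this identity into the cone condition. Since $x_1,x_2\in\Gamma$ and $\Gamma$ is an intrinsic $\lambda$--Lipschitz graph, $x_2=x_1\cdot(x_1^{-1}x_2)\in\Gamma$ forces $x_1^{-1}x_2\notin\Cone_\lambda$, i.e. $|y_n(x_1^{-1}x_2)|\le\lambda\|x_1^{-1}x_2\|$. Now $y_n$ is a homomorphism vanishing on $V_0$ with $y_n(w)=1$, so $y_n(x_1^{-1}x_2)=y_n(w^{t_2-t_1})=t_2-t_1$; and by the triangle inequality together with $\|w^\alpha\|=|\alpha|\,\|w\|$ and $\|w\|<(\lambda')^{-1}$ (the latter because $w\in\Cone_{\lambda'}$ and $y_n(w)=1$, whence $\lambda'\|w\|<|y_n(w)|=1$), we get $\|x_1^{-1}x_2\|\le\|p\|+|t_2-t_1|\,\|w\|\le\|p\|+(\lambda')^{-1}|t_2-t_1|$. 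Combining the two estimates yields $|t_2-t_1|\le\lambda\|p\|+(\lambda/\lambda')|t_2-t_1|$, and since $\lambda<\lambda'$ we may absorb the last term to obtain $|f_w(v_1)-f_w(v_2)|=|t_2-t_1|\le\frac{\lambda\lambda'}{\lambda'-\lambda}\,\|p\|=\frac{\lambda\lambda'}{\lambda'-\lambda}\,d(v_1,v_2)$. Hence $L=\frac{\lambda\lambda'}{\lambda'-\lambda}$ works.

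There is no serious obstacle here; essentially all the content sits in the identity $x_1^{-1}x_2=w^{t_2-t_1}p$, which is exactly what the choice of the foliation by cosets of $P_w=V_0\cap w^{\overline{\Omega}}$ buys us. It is worth emphasizing why restricting to a single coset is essential: on all of $V_0$ the function $f_w$ only satisfies the nonlinear intrinsic Lipschitz inequality and not a genuine Lipschitz bound, because a general displacement $v_1^{-1}v_2$ fails to commute with $w$ and the resulting commutator term inflates $\|x_1^{-1}x_2\|$; moving within $gP_w$ kills precisely that term. The one quantitative point to keep an eye on is the absorption step, which genuinely requires $\lambda<\lambda'$ (equivalently, $\|w\|$ small enough relative to $1/\lambda$) — this is why the lemma carries two cone parameters, and why $L\to\infty$ as $\lambda'\to\lambda^+$.
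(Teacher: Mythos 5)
Your proof is correct and follows essentially the same route as the paper's: both exploit the commutation $[p,w]=\overline{\Omega}(p,w)Z=\zero$ for $p\in P_w$ to get the identity $x_1^{-1}x_2 = p\cdot w^{t_2-t_1}$, then feed it into the cone condition $|y_n(\cdot)|\le\lambda\|\cdot\|$ and absorb the $(\lambda/\lambda')|t_2-t_1|$ term. The only cosmetic difference is that the paper first reduces to $g=\zero$ by left-invariance, whereas you work directly in the coset $gP_w$; the constant $L=\lambda\lambda'/(\lambda'-\lambda)$ matches.
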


\begin{proof}
  Let $f_w\from V_0\to \R$ be the function such that $\Gamma=\Gamma_{f_w,w}$, as in Lemma~\ref{l:quasi-Q} and let $g\in \H_n$.  We claim that $f_w|_{gP_w}$ is Lipschitz.  We may assume $g = \zero$ as $d$ is left-invariant.  Define $\Psi\from \Pi_w(\Gamma)\to \Gamma$, $\Psi(u)=uw^{f_w(u)}$.

  Let $u,v \in P_w\cap \Pi_w(\Gamma)$.  Since $\Psi(u),\Psi(v)\in \Gamma$, the intrinsic Lipschitz condition implies that $\Psi(u) \notin \Psi(v) \cdot \Cone_\lambda$ or
  \begin{align*}
    h := \Psi(v)^{-1} \Psi(u) \notin \Cone_\lambda.
  \end{align*}
  Since $v^{-1}u \in P_w$ is in the $\overline{\Omega}$--complement of $w$, it commutes with $w$, so we can decompose $h$ as
  \begin{align*}
    h = (vw^{f_w(v)})^{-1}(uw^{f_w(u)}) = w^{-f_w(v)}v^{-1}u w^{f_w(u)} = v^{-1}u \cdot w^{f_w(u)-f_w(v)}.
  \end{align*}
  Then, since $v^{-1}u\in V_0$,
  \begin{equation}
  \label{lem:nearby slices-1}
    |y_n(h)| = |y_n(w^{f_w(u) - f_w(v)})| = |f_w(u)-f_w(v)|.
  \end{equation}
  Moreover, since $w \in \Cone_{\lambda'}$ and $y_n(w)=1$,
    \begin{equation}
    \begin{split}
  \label{lem:nearby slices-2}
    d(h,\mathbf{0}) &\le d(u,v)+d(w^{f_w(u)}, w^{f_w(v)})\\ 
    & = d(u,v) + d(\mathbf{0},w) |f_w(v) - f_w(u)|\\
    & \le d(u,v) + \frac{1}{\lambda'} |f_w(v) - f_w(u)|.
  \end{split}
  \end{equation}
  
  Since $h \notin \Cone_\lambda$, we have $|y_n(h)|\le \lambda d(\mathbf{0},h)$.  Then,
  \begin{equation*}
 |f_w(u)-f_w(v)|\overset{\eqref{lem:nearby slices-1}}{=} |y_n(h)| \leq \lambda d(\mathbf{0},h)  \overset{\eqref{lem:nearby slices-2}}{\le} \lambda \left(d(u,v) + \frac{1}{\lambda'} |f_w(v) - f_w(u)|\right)
  \end{equation*}
 and  by the fact that $\lambda'>\lambda$,
  \begin{equation*}
  |f_w(u)-f_w(v)| \le \frac{\lambda}{1-\frac{\lambda}{\lambda'}} d(u,v) =
 \frac{\lambda\lambda'}{\lambda'-\lambda}d(u,v). 
  \end{equation*}
\end{proof}

\section{Comparing $\beta$--numbers}\label{sec:comparing}

Since Theorem~\ref{thm:beta2 highdim} deals with intrinsic graphs, its proof will use parametric versions of $\beta$--numbers, i.e., quantities that measure how close a function $f$ is to an affine function rather than how close the graph $\Gamma_f$ is to a plane.  In Lemma~\ref{lem:nearby slices}, we saw that there are many different ways of writing $\Gamma$ as a graph, all of which lead to different parametric $\beta$--numbers.  In this section, we prove some inequalities comparing parametric and non-parametric $\beta$--numbers.

We first prove the following simple lemma.
\begin{lemma}\label{lem:beta alpha}
  Let $0<\lambda<\lambda'<1$.  There is a $c>0$ with the following property.  Let $w\in A\cap \Cone_{\lambda'}$ be such that $y_n(w)=1$.  Let $f \from V_0\to \R$ be an intrinsic $(w,\lambda)$--Lipschitz function and let $\Gamma:=\Gamma_{f,w}$ be its intrinsic graph.  For any $r>0$ and any $x\in \Gamma$, 
  \begin{equation}\label{eq:beta alpha}
  \beta_{\Gamma}(x,r) \lesssim_{\lambda,\lambda'} r^{-\frac{2n+3}{2}} \inf_{h \in \Aff} \|f - h\|_{L_2(Q_{w}(x,cr))}.
  \end{equation}
\end{lemma}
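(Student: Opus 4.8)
The plan is to produce a vertical plane $L$ witnessing the bound on $\beta_\Gamma(x,r)$ directly from a near-optimal affine approximant $h$ of $f$. First I would reduce to the case $x = \zero$ and $r = 1$ by left-translating by $x^{-1}$ and rescaling by $\delta_{r^{-1}}$; both operations preserve the class of intrinsic $\lambda$-Lipschitz graphs and the class of horizontal vectors $w$ with $y_n(w) = 1$, they are compatible with $Q_w$ via the identity $Q_w(g,r) = \Pi_w(g\delta_r(R_w))$ used in Section~\ref{sec:slices}, and both sides of \eqref{eq:beta alpha} scale correctly (the left side is scale-invariant, and the $L_2$ norm over $\delta_r(R_w)$ picks up exactly the factor $r^{\frac{2n+3}{2}}$ from the $(2n+1)$-dimensional Lebesgue measure and the rescaling of $f$). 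So it suffices to bound $\beta_\Gamma(\zero,1)$ by a constant times $\inf_{h\in\Aff}\|f-h\|_{L_2(Q_w(\zero,c))}$ with $\zero\in\Gamma$, i.e., $f(\zero)=0$.

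Next, fix $h\in\Aff$ and let $\tilde h$ be the affine function $\H_n\to\R$ restricting to $h$ on $V_0$; since $h\in\Aff$, $\tilde h = \tau\circ\pi$ for an affine $\tau\from\R^{2n}\to\R$. The candidate plane is $L := \{p\in\H_n : y_n(p) = \tilde h(\Pi_w(p))\}$, or rather the vertical plane obtained by replacing $\tilde h$ with its best linear-in-$\pi$ expression; one checks this is exactly $\pi^{-1}(P)$ for a $(2n-1)$-plane $P\subset\R^{2n}$, because the defining equation $y_n = \sum\alpha_i x_i + \sum_{i<n}\beta_i y_i + \gamma$ is linear in the coordinates $\pi(p)$ (note $\Pi_w(p) = pw^{-y_n(p)}$ only shifts $\pi(p)$ by $-y_n(p)\pi(w)$, so $\tilde h\circ\Pi_w$ is still affine in $\pi(p)$). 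Thus $L\in\VP$. Now for $y\in B(\zero,1)\cap\Gamma$, write $y = vw^{f(v)}$ with $v = \Pi_w(y)\in V_0$; by Lemma~\ref{l:quasi-Q}\eqref{l:quasi-Q2}, $v\in Q_w(\zero,c)$ for a suitable $c$. The key geometric estimate I would establish is
\begin{equation*}
d(y,L) \lesssim_{\lambda,\lambda'} |f(v) - h(v)|^{1/2} + |f(v) - h(v)|,
\end{equation*}
coming from the fact that $y$ and the ``vertical correction'' point $vw^{h(v)}\in L$ differ by $w^{f(v)-h(v)}$, whose CC-distance to $\zero$ is $\|w\|\cdot|f(v)-h(v)|$ when one stays horizontal — but moving along $\langle w\rangle$ from $vw^{h(v)}$ to $y$ changes the $z$-coordinate by a term linear in $\Omega(\pi(v),\pi(w))$, which is $O(1)$ on $B(\zero,1)$, so the displacement has CC-norm $\lesssim |f(v)-h(v)|^{1/2}$ once $|f(v)-h(v)|\lesssim 1$ (and $\lesssim|f(v)-h(v)|$ in the large regime, which we can absorb). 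Actually the cleaner route: $L$ is itself an intrinsic graph $\Gamma_{h,w}$ over $V_0$, so $d(y,L) \le d(vw^{f(v)}, vw^{h(v)}) = \|w^{f(v)-h(v)}\| = \|w\|\,|f(v)-h(v)|$; combined with $\|w\|\le(\lambda')^{-1}$ this already gives the clean bound $d(y,L) \le (\lambda')^{-1}|f(v)-h(v)|$ pointwise, no square root needed.

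With the pointwise bound $d(y,L)\le(\lambda')^{-1}|f(v)-h(v)|$ in hand (where $v = \Pi_w(y)$), I would finish by bounding the defining integral. Since $\beta_\Gamma(\zero,1)^2 \le \int_{B(\zero,1)\cap\Gamma} d(y,L)^2\ud\cH^{2n+1}(y) \lesssim_{\lambda} \int_{Q_w(\zero,c)} |f(v)-h(v)|^2 \ud\mu(v)$ by the change of variables $y = \Psi(v) = vw^{f(v)}$ together with Lemma~\ref{l:quasi-Q}\eqref{l:quasi-Q3} (the pushforward $(\Pi_w)_*\cH^{2n+1}|_\Gamma \approx \mu$) and the containment $\Pi_w(B(\zero,1)\cap\Gamma)\subset Q_w(\zero,c)$ from Lemma~\ref{l:quasi-Q}\eqref{l:quasi-Q2}, we get $\beta_\Gamma(\zero,1) \lesssim_{\lambda,\lambda'} \|f-h\|_{L_2(Q_w(\zero,c))}$; taking the infimum over $h\in\Aff$ gives the $r=1$ case, and undoing the scaling gives \eqref{eq:beta alpha} with the stated power $r^{-\frac{2n+3}{2}}$. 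The main point requiring care — and the only place the Lipschitz hypotheses enter beyond the Ahlfors regularity — is verifying that $Q_w(\zero,c)$ genuinely contains $\Pi_w(B(\zero,1)\cap\Gamma)$ with a constant depending only on $\lambda,\lambda'$, and that the change-of-variables constant from Lemma~\ref{l:quasi-Q}\eqref{l:quasi-Q3} is likewise controlled; both are already supplied by Lemma~\ref{l:quasi-Q}, so the argument is essentially an assembly of the preceding lemmas. I do not expect a serious obstacle; the one subtlety is making sure the candidate plane $L$ is a \emph{vertical} plane (contains the center), which is where we use that $h$ is vertical affine — an affine function on $V_0$ that were not vertical would produce a set that is not of the form $\pi^{-1}(P)$.
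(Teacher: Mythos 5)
Your proposal is correct and, once you discard the square-root detour for the cleaner route $d(y,L)\le d(vw^{f(v)},vw^{h(v)})=\|w\|\,|f(v)-h(v)|\le(\lambda')^{-1}|f(v)-h(v)|$, it coincides with the paper's proof: both use the vertical plane $S=\Gamma_{h,w}$ as the competitor in $\beta_\Gamma$, the same pointwise distance estimate, and Lemma~\ref{l:quasi-Q} (parts \eqref{l:quasi-Q2} and \eqref{l:quasi-Q3}) to convert the integral over $B(x,r)\cap\Gamma$ into $\|f-h\|_{L_2(Q_w(x,cr))}^2$. The only cosmetic difference is your preliminary normalization to $x=\zero$, $r=1$; the paper keeps $x,r$ general throughout, but the scaling bookkeeping works out either way.
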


\begin{proof}
  Let $c$ be as in Lemma~\ref{l:quasi-Q} so that  $\Pi_w(B(x,r) \cap \Gamma)\subset Q_w(x,cr)$.  Let $Q=Q_w(x,cr)$.
  Let $h\in \Aff$ be the affine function that minimizes $\|h-f\|_{L_2(Q)}.$
  Let $S=\Gamma_{h,w}$ and let $\mu$ be Lebesgue measure on $V_0$.
  Then $S$ is a vertical plane, so 
  \begin{align*}
  \beta_{\Gamma}(x,r)^2 & \le r^{-2n-3} \int_{B(x,r)\cap \Gamma_f} d(y,S)^2 \ud \cH^{2n+1}(y) \\ & \stackrel{Lem.~\ref{l:quasi-Q}}{\approx_{\lambda,\lambda'}}r^{-2n-3}\int_{\Pi_w(B(x,r)\cap \Gamma_f)} d(vw^{f(v)},S)^2 \ud \mu(v) \\
  & \le r^{-2n-3}\int_{Q} d(vw^{f(v)},vw^{h(v)})^2 \ud \mu(v)\\  & \lesssim_{\lambda,\lambda'} r^{-2n-3}\|f-h\|_{L_2(Q)}^2.
  \end{align*}
  Taking square roots of both sides gives the desired inequality.
\end{proof}

Let $P\subset V_0$ be a vertical subgroup.  We say that a measurable function $f$ is \emph{$P$--slice affine} if for every $v\in V_0$, the restriction of $f$ to $vP$ is a vertical affine function.  Let $\SAff_P$ be the set of $P$--slice affine functions. Note that we have the following inclusion of function spaces:
\begin{align}
    \Aff \subseteq \SAff_P \subseteq \Vert. \label{e:fcn-inclusion}
\end{align}

The second result we will need deals with different parameterizing functions for the same graph.   Suppose that $\Gamma$ is an intrinsic Lipschitz graph, $w$ and $w'$ are horizontal vectors with $y_n(w)=y_n(w')=1$ and $[w,w']=0$. Moreover, assume that $f$ and $f'$ are two parameterizing functions for $\Gamma$, so that $\Gamma=\Gamma_{f,w}=\Gamma_{f',w'}$.  The relationship between $f$ and $f'$ is nonlinear.  Indeed, by Lemma~\ref{lem:nearby slices}, $f$ is Lipschitz on cosets of $P=P_w=V_0\cap w^{\overline{\Omega}}$, while $f'$ need not be.  Regardless, we will show that if $f$ is close to $P$--slice affine, then $f'$ is close to $P$--slice affine too.  

\begin{lemma}\label{lem:alpha compare saff}
  Let $0<\lambda<1$.  There exist $c>0$ and $\lambda' \in (\frac{1+\lambda}{2},1)$ depending only on $\lambda$ so that the following holds.  Let $w,w' \in A \cap \Cone_{\lambda'}$ be horizontal vectors with $y_n(w)=y_n(w')=1$ and $[w,w']=0$. 
  
  Let $\Gamma$ be a $\lambda$--intrinsic Lipschitz graph with parameterizing functions $f, f' \from V_0 \to \R$ such that $\Gamma=\Gamma_{f,w}=\Gamma_{f',w'}$.  Then, for any $x \in \Gamma$ and $r > 0$,
  \begin{equation}\label{eq:alpha compare saff}
    \min_{\sigma\in \SAff_{P_w}} \|f'-\sigma\|_{L_2(Q_{w'}(x,r))} \lesssim \min_{\sigma\in \SAff_{P_w}} \|f-\sigma\|_{L_2(Q_w(x,cr))}.
  \end{equation}
\end{lemma}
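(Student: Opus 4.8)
The plan is to reduce the statement to a single change of variables on $V_0$ which intertwines the two parametrizations, to use it to produce a candidate $P_w$--slice affine function, and to control the resulting error by a pointwise estimate on each $P_w$--coset.

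\textbf{The change of variables.} Since $[w,w']=0$ and $y_n(w)=y_n(w')=1$, the horizontal vector $s:=w'w^{-1}$ satisfies $y_n(s)=0$ and $\overline{\Omega}(s,w)=\Omega(\pi(s),\pi(w))=0$, so $s\in V_0\cap w^{\overline{\Omega}}=P_w$ (in fact $s\in C_w$), and $s$ commutes with $w$. Moreover, as $w,w'\in\Cone_{\lambda'}$ with $y_n(w)=y_n(w')=1$, both are horizontal vectors of norm $<1/\lambda'$ on the affine hyperplane $\{y_n=1\}$ of $A$; hence $\|s\|=|w'-w|\to 0$ as $\lambda'\to 1$, so I fix $\lambda'\in(\tfrac{1+\lambda}{2},1)$, depending only on $\lambda$, making $\|s\|$ as small as needed below. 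Now set $\Phi:=\Pi_{w'}\circ(\Pi_w|_\Gamma)^{-1}\colon V_0\to V_0$, a Borel bijection. Using $(sw)^\alpha=s^\alpha w^\alpha$ (valid since $s,w$ commute) one computes $\Phi(v)=vs^{-f(v)}$, and since $\Pi_{w'}=\Phi\circ\Pi_w$ on $\Gamma$ one also gets $f'\circ\Phi=f$. Because $s^{-f(v)}\in C_w\subseteq P_w$, the map $\Phi$ sends each coset $vP_w$ into itself; identifying $vP_w\cong\H_{n-1}$ with coordinates $(u,\zeta)=(\pi(\cdot),z(\cdot))$, a direct computation shows $\Phi$ is an ``upper triangular'' polynomial map on $vP_w$ whose $u$--component is $u\mapsto u-f(u,\zeta)\pi(s)$. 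The same formula with any $\sigma\in\SAff_{P_w}$ in place of $f$ defines a map $\Phi_\sigma$ whose $u$--component is \emph{affine} in $u$ with linear part $A_\sigma=\mathrm{Id}-\pi(s)\otimes(\text{slope of }\sigma|_{vP_w})$; so, provided $A_\sigma$ is invertible on each coset, $\Phi_\sigma$ is a polynomial bijection of $V_0$ preserving $P_w$--cosets, and $\sigma\circ\Phi_\sigma^{-1}$ again lies in $\SAff_{P_w}$.

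\textbf{Domains and measures.} From $\Phi\circ\Pi_w|_\Gamma=\Pi_{w'}|_\Gamma$ together with part \ref{l:quasi-Q2} of Lemma~\ref{l:quasi-Q}, applied to both $w$ and $w'$ (both in $\Cone_{\lambda'}$), one obtains $Q_{w'}(x,r)\subseteq\Pi_{w'}(B(x,c_0r)\cap\Gamma)=\Phi\big(\Pi_w(B(x,c_0r)\cap\Gamma)\big)\subseteq\Phi(Q_w(x,c_0^2r))$ for a constant $c_0=c_0(\lambda)$; so, fixing $c:=c_0^2$, we have $Q_{w'}(x,r)\subseteq\Phi(Q_w(x,cr))$. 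From part \ref{l:quasi-Q3} of Lemma~\ref{l:quasi-Q}, applied to $w$ and to $w'$, $\Phi_*\mu\approx\mu$ and hence $(\Phi^{-1})_*\mu\approx\mu$, so change of variables by $\Phi$ distorts $L_2$--norms by a factor $\approx_\lambda 1$.

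\textbf{The estimate.} Let $\sigma\in\SAff_{P_w}$ minimize $\|f-\cdot\|_{L_2(Q_w(x,cr))}$; such a minimizer exists because, by definition of $\SAff_{P_w}$, this minimization decouples (via disintegration of $\mu$ along $P_w$--cosets) into independent best affine fit problems on the slices $Q_w(x,cr)\cap vP_w$, which are quasiballs of radius $\approx r$ by Lemma~\ref{lem:slices are quasi}. Since $f|_{vP_w}$ is $L(\lambda)$--Lipschitz with respect to $d_{\H_n}$ by Lemma~\ref{lem:nearby slices}, and those slices are quasiballs of radius $\approx r$ on which $\pi$ has variance $\approx r^2$, the slope $\ell_v$ of $\sigma|_{vP_w}$ satisfies $\|\ell_v\|\lesssim_\lambda 1$ on every slice meeting $Q_w(x,cr)$; combined with the smallness of $\|s\|$ this makes every $A_\sigma$ invertible with $\|A_\sigma^{-1}\|\le 2$, so $\tilde\sigma:=\sigma\circ\Phi_\sigma^{-1}\in\SAff_{P_w}$ is well defined. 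Then, writing $Q=Q_w(x,cr)$,
\begin{align*}
  \min_{\sigma'\in\SAff_{P_w}}\|f'-\sigma'\|_{L_2(Q_{w'}(x,r))}
  &\le \|f'-\tilde\sigma\|_{L_2(Q_{w'}(x,r))}
  \le \|f'-\tilde\sigma\|_{L_2(\Phi(Q))}\\
  &\lesssim_\lambda \|f'\circ\Phi-\tilde\sigma\circ\Phi\|_{L_2(Q)}
  = \|f-\tilde\sigma\circ\Phi\|_{L_2(Q)},
\end{align*}
using the domain inclusion and measure comparison from the previous step and $f'\circ\Phi=f$. Finally, a direct computation on each coset yields the pointwise identity $\sigma-\tilde\sigma\circ\Phi=(f-\sigma)\,\ell_v(A_\sigma^{-1}\pi(s))$, whence $|\sigma-\tilde\sigma\circ\Phi|\lesssim_\lambda\|s\|\,|f-\sigma|\le\tfrac12|f-\sigma|$, so the last display is $\lesssim_\lambda\|f-\sigma\|_{L_2(Q)}+\|\sigma-\tilde\sigma\circ\Phi\|_{L_2(Q)}\lesssim_\lambda\|f-\sigma\|_{L_2(Q)}=\min_{\sigma\in\SAff_{P_w}}\|f-\sigma\|_{L_2(Q_w(x,cr))}$, which is the claim.

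\textbf{Main obstacle.} The algebraic bookkeeping in the first step (identifying $\Phi$, checking that $\Phi_\sigma$ is affine in the $u$--coordinate and that $\tilde\sigma\in\SAff_{P_w}$) is routine Heisenberg computation. The genuinely delicate point is the uniform bound on the slope of the minimizer $\sigma$ over \emph{all} slices: it is needed both to ensure $\tilde\sigma$ is well defined and slice affine (invertibility of $A_\sigma$) and to close the pointwise error bound $|\sigma-\tilde\sigma\circ\Phi|\le\tfrac12|f-\sigma|$. This is exactly where Lemma~\ref{lem:nearby slices} (Lipschitz-on-cosets) and Lemma~\ref{lem:slices are quasi} (quasiball structure of slices) enter, and it is the reason $\lambda'$ must be taken close to $1$, so that $\|w'w^{-1}\|$ is small relative to the slope bound.
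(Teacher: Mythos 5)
Your proof is correct and follows essentially the same route as the paper. You package the argument in terms of a single change-of-variables map $\Phi=\Pi_{w'}\circ(\Pi_w|_\Gamma)^{-1}$ satisfying $f'\circ\Phi=f$, and a modified map $\Phi_\sigma$ built from the slice-affine minimizer; the paper instead works slice by slice, picking an affine $T$ on each $P_w$--coset, bounding $\Lip(T)$ (their Lemma~\ref{l:lip-bound}, which you re-derive inline from the Lipschitz bound and the quasiball structure), and producing a companion affine $T'$ via their Lemma~\ref{l:switch affine} (which is exactly your invertibility-of-$A_\sigma$ step). Your pointwise identity $\sigma-\tilde\sigma\circ\Phi=(f-\sigma)\,\ell_v(A_\sigma^{-1}\pi(s))$ is the same estimate as the paper's inequality~\eqref{eq:f'-T'}, and both arguments use Lemma~\ref{l:quasi-Q} identically, for the domain inclusions and for the comparison of $\mu$ with $(\Pi_w)_*\cH^{2n+1}|_\Gamma$ and $(\Pi_{w'})_*\cH^{2n+1}|_\Gamma$.
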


The key to this bound is that since $[w,w']=0$, we have $w-w'\in P_w$ and thus $P_w+\langle w\rangle=P_w+\langle w'\rangle$.  Thus, if $F\in \SAff_{P_w}$ satisfies a Lipschitz bound on each slice, then there is an $F'\in \SAff_{P_w}$ such that $\Gamma_{F,w}=\Gamma_{F',w'}$.

We will need two lemmas dealing with affine functions on slices.
\begin{lemma} \label{l:lip-bound}
  Let $\mu>0$.  There is a constant $c > 1$ depending only on $\mu$ and $n$ so that if $f \from \H_n \to \R$ is a Lipschitz function and $U \subset \H_n$ is a $\mu$--quasiball, then the affine function $F\from \H_n \to \R$ such that
  \begin{align*}
    \|f - F\|_{L_2(U)} = \inf_{T \in \Aff} \|f- T\|_{L_2(U)}
  \end{align*}
  satisfies $\|F\|_{\lip} \le c \|f\|_{\lip}$, where the infimum is taken over affine functions on $\H_n$.
\end{lemma}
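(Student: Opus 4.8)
The plan is to reduce to an estimate at unit scale via the dilations $\delta_t$ and left-translations, then exploit the fact that the best $L_2$-affine approximant depends linearly on $f$ and that on a fixed-geometry quasiball this linear map is bounded from $L_2$ to the space of affine functions equipped with the Lipschitz (i.e.\ gradient) norm. Concretely, write $U$ as a $\mu$-quasiball with $B(p,R)\subseteq U\subseteq B(p,\mu R)$. Replacing $f$ by $g = f\circ (\ell_p\circ \delta_R)$, where $\ell_p$ is left translation by $p$, and correspondingly conjugating $F$, we have $\|g\|_{\lip} = R\,\|f\|_{\lip}$ (since $\delta_R$ scales the metric by $R$ and left translation is an isometry), the quasiball becomes $U' = \delta_{1/R}(p^{-1}U)$ with $B(\zero,1)\subseteq U'\subseteq B(\zero,\mu)$, and the minimization problem transforms covariantly: the best affine approximant to $g$ on $U'$ is exactly the conjugate of $F$, and its Lipschitz constant scales the same way as $\|f\|_{\lip}$. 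Hence it suffices to prove the claim for $U' \subseteq B(\zero,\mu)$ containing $B(\zero,1)$, with the constant $c = c(\mu,n)$.

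At unit scale the argument is linear algebra plus compactness. The space $\mathcal A$ of affine functions $\H_n\to\R$ is finite-dimensional (dimension $2n+1$), parametrized by $f(v) = \sum_i a_i x_i(v) + \sum_i b_i y_i(v) + \gamma$, and the Lipschitz constant of such a function with respect to $d_{\H_n}$ is comparable (with constants depending only on $n$) to $|(a,b)|$, the Euclidean norm of its linear part, because horizontal directions in $\H_n$ realize all directions in $\R^{2n}$ up to bounded distortion and the $Z$-direction contributes nothing to an affine function. Now $P_{U'}\from L_2(U')\to \mathcal A$, the orthogonal projection onto affine functions in $L_2(U',\mu_{\mathrm{Leb}})$, is a bounded linear map; its operator norm, and in particular the constant in $\|(a,b)\|\le C\|P_{U'}g\|_{L_2(U')} \le C\|g\|_{L_2(U')}$, depends only on the Gram matrix of the functions $1, x_i, y_i$ on $U'$. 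But I would like to bound $\|F\|_{\lip}$ by $\|f\|_{\lip}$, not by $\|f\|_{L_2}$, so instead I argue: $F = P_{U'}g$ and also $F - (\text{any affine } T) = P_{U'}(g - T)$, so choosing $T$ to be the first-order Taylor expansion of $g$ at $\zero$ (which has $\|T\|_{\lip}\le \|g\|_{\lip}$ and $\|g - T\|_{L_\infty(B(\zero,\mu))} \lesssim_{\mu,n} \|g\|_{\lip}$ by the mean value inequality along horizontal curves of bounded length, since $g-T$ vanishes to first order at $\zero$), we get
\begin{align*}
  \|F\|_{\lip} &\le \|T\|_{\lip} + \|F - T\|_{\lip} = \|T\|_{\lip} + \|P_{U'}(g-T)\|_{\lip} \\
  &\lesssim_{\mu,n} \|g\|_{\lip} + \|g - T\|_{L_2(U')} \lesssim_{\mu,n} \|g\|_{\lip}.
\end{align*}
Undoing the rescaling gives $\|F\|_{\lip}\le c(\mu,n)\|f\|_{\lip}$.

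The main obstacle is making precise the equivalence between the $L_2$-norm and the Lipschitz-norm on the finite-dimensional space $\mathcal A$, uniformly over all quasiballs $U'$ with $B(\zero,1)\subseteq U'\subseteq B(\zero,\mu)$. Since every such $U'$ contains the fixed ball $B(\zero,1)$, the quadratic form $T\mapsto \|T\|_{L_2(U')}^2$ is bounded below by $\|T\|_{L_2(B(\zero,1))}^2$, which is a fixed positive-definite form on $\mathcal A$; and it is bounded above since $U'\subseteq B(\zero,\mu)$. Thus the equivalence constants can be taken to depend only on $\mu$ and $n$, and no genuine compactness argument over the (noncompact) family of quasiballs is needed — the sandwiching $B(\zero,1)\subseteq U'\subseteq B(\zero,\mu)$ does all the work. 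The only other point requiring care is the Taylor estimate $\|g-T\|_{L_\infty(B(\zero,\mu))}\lesssim_{\mu,n}\|g\|_{\lip}$: this follows by integrating the horizontal gradient of $g$ along a horizontal path from $\zero$ to a point of $B(\zero,\mu)$ of length $\lesssim_\mu 1$ and comparing with the linear part $T$, using that the horizontal derivatives of an affine function are constant.
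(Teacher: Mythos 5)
Your proposal is correct and follows essentially the same route as the paper: rescale to unit scale, exploit the finite-dimensionality of $\Aff$ so that the Lipschitz norm is controlled by the $L_2(B(\zero,1))$ norm, use that the orthogonal projection onto $\Aff$ is an $L_2$-contraction, and bound the $L_2$ norm of the remainder by the Lipschitz constant. The one spot that needs repair is your choice of $T$ as the ``first-order Taylor expansion of $g$ at $\zero$'': a Lipschitz function need not be Pansu-differentiable at the specific point $\zero$, so that object may not exist, and the later claim that $g-T$ ``vanishes to first order'' presupposes exactly this. The fix is to drop the linear part entirely and take $T\equiv g(\zero)$, a constant: then $\|T\|_{\lip}=0$, $T\in\Aff$ is fixed by the projection, and $\|g-T\|_{L_\infty(B(\zero,\mu))}\le\mu\|g\|_{\lip}$ follows immediately from the Lipschitz condition with no mean-value or differentiability argument. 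This is precisely what the paper does (subtracting $f(\zero)$), and with this substitution your proof matches the paper's.
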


\begin{proof}
  Since $U$ is a $\mu$--quasiball, there is a $u_0\in U$ and an $r>0$ such that 
  $$B(u_0, r)\subset U \subset B(u_0,\mu r).$$
  By rescaling and translation, we may suppose that $r=1$ and $u_0=\zero$.
  Let $\langle g,h\rangle_U=\int_U gh\ud \cH^{2n+2}$ be the inner product on $L_2(U)$ and let $\|\cdot\|_2$ denote the norm in $L_2(U)$.  Let $f_0=f-f(\zero)$, and let $F_0=F-f(\zero)$.  Then $$\|f_0 - F_0\|_{2} = \inf_{T \in \Aff} \|f_0- T\|_{2},$$
  so it suffices to prove the lemma for $f_0$ and $F_0$.
  
  Let 
  $$C_1=\max_{K\in \Aff} \frac{\|K\|_{\lip}}{\|K\|_{L_2(B(\zero,1))}}=\max_{K\in \Aff} \frac{\|K\|_{\lip}}{\|K-\fint_{B(\zero,1)} K\ud \cH^{2n+2}\|_{L_2(B(\zero,1))}}.$$
  This is finite by compactness, and if $K\in \Aff$, then 
  $$\|K\|_{\lip} \le C_1 \|K\|_{L_2(B(\zero,1))} \le C_1 \|K\|_{2}.$$
  Let
  $$C_2=\max_{\substack{\|g\|_{\lip} < \infty \\ g(\zero)=0}} \frac{\|g\|_{2}}{\|g\|_{\lip}}.$$
  If $g(\zero)=0$, then $\|g\|_{L_\infty(U)}\le \mu \|g\|_{\lip}$, so 
  $$C_2\le \mu \sqrt{\cH^{2n+2}(U)} \lesssim \mu^{n+2}.$$

  Since $f_0-F_0$ is orthogonal to $F_0$, we have $\|F_0\|_2\le \|f_0\|_2$ and thus
  \begin{equation*}
  \|F_0\|_{\lip} \le C_1 \|F_0\|_2\le C_1 \|f_0\|_2\le C_1C_2\|f_0\|_{\lip}\lesssim \mu^{n+2} \|f_0\|_{\lip}.\qedhere
  \end{equation*}
\end{proof}

\begin{lemma}\label{l:switch affine}
  Let $L>0$.  There is a $\lambda\in (0,1)$ such that if $w,w' \in A \cap \Cone_{\lambda}$, $y_n(w)=y_n(w')=1$, $[w,w']=0$, and $T\from P_w\to \R$ is an affine function with $\Lip(T)<L$, then there is an affine function $T'\from P_w\to \R$ such that $\Gamma_{T,w}=\Gamma_{T',w'}$ and $\Lip(T')<2L$.
\end{lemma}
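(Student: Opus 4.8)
The plan is to write down an explicit reparametrization carrying the $w$--graph of $T$ onto the $w'$--graph of an affine function, and then to bound the Lipschitz constant of the new graphing function by a rank-one perturbation computation. The parameter $\lambda$ enters only to force $w$ and $w'$ to be close enough — by taking $\lambda$ close to $1$, depending on $L$ — for this perturbation to be small.

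First I would record the algebra. Set $u := w - w'$. Since $y_n(w) = y_n(w') = 1$ we have $y_n(u) = 0$, so $u \in V_0$; and since $[w,w'] = 0$ we get $\overline{\Omega}(u,w) = \overline{\Omega}(w,w) - \overline{\Omega}(w',w) = 0$, so $u \in V_0 \cap w^{\overline{\Omega}} = P_w$, and in fact $u \in C_w$. The vectors $u$ and $w'$ are commuting horizontal vectors with $w = uw' = w'u$, and since each $\delta_t$ is an automorphism, $w^t = (w')^t u^t = u^t(w')^t$ for every $t \in \R$. Hence for $v \in P_w$,
\[
  v\,w^{T(v)} = v\,u^{T(v)}\,(w')^{T(v)} = \Phi(v)\,(w')^{T(v)}, \qquad \text{where}\quad \Phi(v) := v\,u^{T(v)}.
\]
Since $P_w$ is a homogeneous subgroup containing $u$, we have $\Phi(v) \in P_w$, so $\Gamma_{T,w} = \{\Phi(v)(w')^{T(v)} : v \in P_w\}$. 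Thus, once $\Phi \from P_w \to P_w$ is known to be a bijection, I can set $T' := T \circ \Phi^{-1}$ and conclude $\Gamma_{T,w} = \Gamma_{T',w'}$.

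The next step is a coordinate computation on $P_w \cong \H_{n-1}$. Write $v = (\bar v, s)$ with $\bar v \in \pi(C_w)$ and $s$ the central coordinate, identify $u$ with $\pi(u) \in \pi(C_w)$, and write the affine $T$ as $\ell \circ \pi + c$ with $\ell$ linear; then the group law gives
\[
  \Phi(\bar v, s) = \Bigl(\bar v + (\ell(\bar v) + c)\,\pi(u),\ \ s + \tfrac12\,(\ell(\bar v) + c)\,\sigma(\bar v, \pi(u))\Bigr),
\]
where $\sigma = \Omega|_{\pi(C_w)}$ is the symplectic form on $\pi(C_w) = \pi(P_w)$. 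The first coordinate is the affine map $\bar v \mapsto (I + \pi(u) \otimes \ell)\bar v + c\,\pi(u)$, whose linear part has determinant $1 + \ell(\pi(u))$; given the first coordinate, the second is a shear in $s$. Hence $\Phi$ is a bijection of $P_w$ as soon as $1 + \ell(\pi(u)) \neq 0$, and then $\Phi^{-1}$ has first coordinate $\bar v' \mapsto (I + \pi(u)\otimes\ell)^{-1}(\bar v' - c\,\pi(u))$, which depends on $\bar v'$ alone. Consequently $T' = T \circ \Phi^{-1}$ depends only on $\bar v'$ and is affine, and by the Sherman--Morrison identity $(I + \pi(u)\otimes\ell)^{-1} = I - (1 + \ell(\pi(u)))^{-1}\,\pi(u)\otimes\ell$ its linear part is $\ell' = (1 + \ell(\pi(u)))^{-1}\ell$.

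Finally I would choose $\lambda$ and estimate. For a vertical affine function on $P_w$, its Lipschitz constant for $d_{\H_n}$ equals $\|\ell\|_\ast := \sup_{h \in P_w \setminus\{\zero\}} |\ell(\pi(h))|/\|h\|$ — immediate from left-invariance of $d$ and $\pi(q^{-1}p) = \pi(p) - \pi(q)$ — so $\Lip(T') = \|\ell'\|_\ast = \Lip(T)/|1 + \ell(\pi(u))|$, while $|\ell(\pi(u))| = |T(u) - T(\zero)| \le \Lip(T)\,d(\zero,u) = \Lip(T)\,d(w,w') < L\,d(w,w')$. It remains to bound $d(w,w')$: if $w \in A \cap \Cone_\lambda$ with $y_n(w) = 1$ then $\|w\| = |\pi(w)| < 1/\lambda$, and since $\pi(w) - \pi(Y_n)$ is orthogonal to $\pi(Y_n)$ (both have $y_n$--coordinate $1$) we get $|\pi(w) - \pi(Y_n)|^2 = \|w\|^2 - 1 < 1/\lambda^2 - 1$; as $[w,w'] = 0$ forces $d(w,w') = |\pi(w) - \pi(w')|$, this yields $d(w,w') < 2\sqrt{1/\lambda^2 - 1}$. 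Choosing $\lambda \in (0,1)$ close enough to $1$ (depending only on $L$) that $2L\sqrt{1/\lambda^2 - 1} < \tfrac12$, we obtain $|\ell(\pi(u))| < \tfrac12$, so $1 + \ell(\pi(u)) \neq 0$, $\Phi$ is a bijection, and $\Lip(T') = \Lip(T)/|1 + \ell(\pi(u))| < 2\Lip(T) < 2L$, as required. The one delicate point is this final step — extracting smallness of $d(w,w')$ from membership of $w,w'$ in $\Cone_\lambda$ — which is exactly why the hypothesis uses the narrow cone $\Cone_\lambda$ with $\lambda$ at our disposal; everything else is routine manipulation of the group law together with a rank-one linear-algebra estimate.
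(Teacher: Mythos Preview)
Your proof is correct and follows essentially the same route as the paper's: both set $u=w-w'\in P_w$, define the reparametrization $\Phi(v)=v\,u^{T(v)}$ (the paper calls this $M$), observe that it descends to an affine map on $C_w=\pi(P_w)$, invert it when $\|w-w'\|<(2L)^{-1}$, and read off $T'=T\circ\Phi^{-1}$. The only cosmetic difference is that you invert the rank-one perturbation explicitly via Sherman--Morrison to get $\ell'=(1+\ell(\pi(u)))^{-1}\ell$, whereas the paper simply bounds $\Lip(m^{-1})\le 2$ from the bi-Lipschitz estimate $\|m(p)-m(q)\|\ge\tfrac12\|p-q\|$ and composes Lipschitz constants.
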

\begin{proof}
  Let $C_w=\pi(P_w)$, which we think of as a vector space with norm $\|\cdot\|$.
  As affine functions do not depend on the $z$--coordinate, we see that $T = \tau \circ \pi$ where $\tau\from C_w \to \R$ is affine and $\Lip(T)=\Lip(\tau)$.
  Let $\lambda\in (0,1)$ be such that if $w,w'\in A\cap \Cone_\lambda$ and $y_n(w)=y_n(w')=1$, then  $\|w-w'\|<(2L)^{-1}$.  Let $s=w-w'$.

  Since $[w,s]=-[w,w']=0$ and $y_n(s)=0$, we have $s\in P_w$.  
  Let $M\from P_w\to P_w$ be the map 
  $$M(p)=\Pi_{w'}(pw^{T(p)})=pw^{T(p)}(w')^{-T(p)}=ps^{T(p)} \qquad \forall p\in P_w.$$
  
  Since $M(v+tZ)=M(v)+tZ$ for all $v\in P_w$ and $t\in \R$, $M$ descends to a map $m\from C_w\to C_w$.
  Let $m=\pi\circ M|_{C_w}$.  By our choice of $\lambda$, for all $p,q\in C_w$,
  $$\|m(p)-m(q)\|=\|p-q+(\tau(p)-\tau(q))s\|\ge \|p-q\|-\Lip(\tau) \|p-q\| \cdot \|s\| \ge \frac{\|p-q\|}{2}.$$
  Therefore $m$ and $M$ are invertible and $\Lip(m^{-1})\le 2$. 
  
  Let $T'(M(p))=T(p)$ for all $p\in P_w$.  This is affine and by construction,
  $$p w^{T(p)} = M(p) (w')^{T(p)}=M(p)(w')^{T'(M(p))},$$
  so $\Gamma_{T,w}=\Gamma_{T',w'}$.  Finally, $T'=\tau\circ m^{-1}\circ \pi$, so $\Lip(T')=\Lip(\tau\circ m^{-1}) < 2L$.  
\end{proof}


Now, we prove Lemma~\ref{lem:alpha compare saff}.
\begin{proof}[{Proof of Lemma~\ref{lem:alpha compare saff}.}]
  By Lemma~\ref{l:quasi-Q}, there is a $c_0>0$ such that for every $x\in \Gamma$ and every $r>0$, $Q_{w'}(x,r)\subset \Pi_{w'}(B(x,c_0r)\cap \Gamma)$ and $\Pi_{w}(B(x, r)\cap \Gamma)\subset Q_w(x,c_0r)$.  Let $c=c_0^2$.
  
  Let $P=P_w$.
  We first consider a single slice $Q_{w}(x,cr) \cap vP$; without loss of generality, we may take $v=\zero$. Let $D := Q_{w}(x,cr) \cap P$ and $D' := Q_{w'}(x,r) \cap P$.  Let $T\from P\to \R$ be the affine function minimizing $\|f-T\|_{L_2(D)}$.    We know from Lemma \ref{lem:nearby slices} that $f$ is Lipschitz on $P$, and if $\frac{1+\lambda}{2}<\lambda'<1$, then $\Lip(f|_P)$ is bounded by a function of $\lambda$.  Therefore, by Lemma~\ref{l:lip-bound}, $\Lip(T)$ is also bounded by a function of $\lambda$, say $\Lip(T)<L=L(\lambda)$.
  
  The intrinsic graph $\Gamma_{T,w}$ is a plane which we call $R$.  We suppose that $\lambda'\in (0,1)$ is close enough to $1$ that we may apply Lemma~\ref{l:switch affine} to find an affine function $T'\from P\to \R$ such that $R=\Gamma_{T',w'}$ and $\Lip(T')\le 2L$.

  Let $g\in \Gamma\cap (P+\langle w\rangle)$ and $u=\Pi_w(g)$.  Then $g=u w^{f(u)}$ and thus $f(u)=y_n(g)$.  Let $h=g w^{T(u)-f(u)}$; then $h=uw^{T(u)}\in R$.
  Let $u'=\Pi_{w'}(g)$ and $v'=\Pi_{w'}(h)$, so that $f(u)=f'(u')=y_n(g)$ and $T(u)=T'(v')=y_n(h)$.  Since $[w,w']=0$, we let $s=w-w'$ and write $$u' = uw^{f(u)}(w')^{-f(u)}=us^{f(u)}$$ and $v'=us^{T(u)}$.
  
  We have
  \begin{multline*}
  |f'(u')-T'(u')|\le |f(u)-T(u)|+|T(u)-T'(u')| \\ =  |f(u)-T(u)|+|T'(v')-T'(u')| 
   \le  |f(u)-T(u)|+\Lip(T')d(u',v').
  \end{multline*}
  Since $d(u',v') = \|s\|\cdot |f(u)-T(u)|,$ 
  \begin{equation}\label{eq:f'-T'}
  |f'(\Pi_{w'}(g))-T'(\Pi_{w'}(g))| \lesssim |f(\Pi_{w}(g))-T(\Pi_{w}(g))|
  \end{equation}
  for every $g\in \Gamma \cap (P+\langle w\rangle)$.
  
  Applying the argument above to every coset of $P$ shows that if $\sigma\in \SAff_P$ minimizes $\|f-\sigma\|_{L_2(Q_w(x,cr))}$, then there is a slice-affine function $\sigma'\in \SAff_P$ such that
  \begin{equation}\label{eq:f'-sigma'}
  |f'(\Pi_{w'}(g))-\sigma'(\Pi_{w'}(g))| \lesssim |f(\Pi_{w}(g))-\sigma(\Pi_{w}(g))|
  \end{equation}
  for every $g\in \Gamma$.
  
  By Lemma~\ref{l:quasi-Q}, if $\mu$ is Lebesgue measure on $V_0$, then 
  \begin{equation}\label{eq:measure equiv}\mu\approx (\Pi_w)_*(\cH^{2n+1}|_{\Gamma}) \approx (\Pi_{Y_n})_*(\cH^{2n+1}|_{\Gamma}).
  \end{equation}
  Therefore,
  \begin{equation*}
      \begin{split}
  \|f'-\sigma'\|^2_{L_2(Q_{w'}(x,r))} &\overset{\eqref{eq:f'-sigma'}}{\lesssim} \int_{\Gamma\cap B(x,\sqrt{c} r)} \left(f'(\Pi_{w'}(g))-\sigma'(\Pi_{w'}(g))\right)^2 \ud \cH^{2n+1}(g) \\ &\overset{\eqref{eq:measure equiv}}{\lesssim} \int_{\Gamma\cap B(x,\sqrt{c} r)} \left(f(\Pi_{w}(g))-\sigma(\Pi_{w}(g))\right)^2 \ud \cH^{2n+1}(g) \\
  &\overset{\eqref{eq:f'-sigma'}}{\lesssim} \|f-\sigma\|^2_{L_2(Q_w(x,cr))}.
  \end{split}
  \end{equation*}
  Thus 
  $$\|f'-\sigma'\|_{L_2(Q_{w'}(x,r))} \lesssim \|f-\sigma\|_{L_2(Q_w(x,cr))} = \min_{\sigma \in \SAff_P} \|f-\sigma\|_{L_2(Q_w(x,cr))},$$
  as desired.
  
\end{proof}

\section{Strong geometric lemma for intrinsic Lipschitz graphs}\label{sec:proof of main}
In this section, we will prove Theorem~\ref{thm:beta2 highdim}, modulo a bound that we will prove in Section~\ref{sec:slice affine}.  Our strategy is to reduce from the intrinsic Lipschitz graph $\Gamma$ to Lipschitz graphs on cosets of $\H_{n-1}$ by slicing.  By the results of Section~\ref{sec:slices}, there are codimension--1 vertical subgroups $W\subset \H_n$ such that the intersections $vW\cap \Gamma$ are graphs of Lipschitz functions defined on subgroups isomorphic to $\H_{n-1}$.

We start with some standard notation. Let $f\from \H_n \to \R$ be a smooth function.  The \emph{horizontal derivatives} of $f$ in the directions $X_i, Y_i, i=1,\dots,n,$ are the left invariant vector fields:
$$X_i f(h):=\frac{\partial f}{\partial x_i} (h)-\frac{1}{2}y_i(h) \frac{\partial f}{\partial z} (h) \mbox{ and }Y_i f(h):=\frac{\partial f}{\partial y_i} (h)+\frac{1}{2}x_i(h) \frac{\partial f}{\partial z} (h), \quad h\in \H_n.$$
The \emph{horizontal gradient} of $f$ is defined as
$$\nabla_H f (h)= (X_1f(h), \dots, X_nf(h), Y_1f(h)\dots Y_nf(h)), \quad h \in \H_n.$$

F\"{a}ssler and Orponen proved that Lipschitz functions on Heisenberg groups satisfy the following version of Dorronsoro's Theorem.  For any function $f\from \H_n \to \R$ and any $x\in \H_n$, $r>0$, let 
\begin{align}\label{eq:define theta}
  \theta_f(B(x,r)) = r^{-2n-4} \inf_{g \in \Aff} \|f - g\|_{L_2(B(x,r))}^2 \approx \inf_{g\in \Aff} \fint_{B(x,r)} \frac{(f(x)-g(x))^2}{r^2}\ud x.
\end{align}
\begin{thm} \label{th:FO} \cite[Theorem 6.1]{FO}
  Let $f \in L_2(\H_n)$ with $\nabla_H f \in L_2(\H_n)$.  Then
  \begin{align*}
    \int_{\H_n} \int_0^\infty \theta_f(B(x,r)) \frac{dr}{r} ~dx \lesssim \|\nabla_H f\|_{L_2(\H_n)}^2.
  \end{align*}
\end{thm}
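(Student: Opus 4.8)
\emph{Sketch of a proof.} The plan is to treat this Dorronsoro inequality as a Littlewood--Paley estimate and reduce it, via a Calder\'on reproducing formula adapted to the dilations $\delta_t$, to the spectral calculus of the sub-Laplacian $\Delta_{\H}=-\sum_{i=1}^{n}(X_i^2+Y_i^2)$; recall that $\|\nabla_H f\|_{L_2(\H_n)}^2=\langle \Delta_{\H}f,f\rangle$ and $\ker\Delta_{\H}=\{0\}$ on $L_2(\H_n)$, and write $Q=2n+2$ for the homogeneous dimension. First I would fix a Schwartz function $m\from[0,\infty)\to\R$ with $m(0)=0$ and rapid decay, normalized so that $\int_0^\infty m(t^2u)\,\frac{dt}{t}=1$ for all $u>0$, and set $\psi_t\ast(\cdot):=m(t^2\Delta_{\H})$, a ``radial'' mean-zero bump at scale $t$; then $\int_0^\infty\psi_t\ast f\,\frac{dt}{t}=f$ for every $f\in L_2(\H_n)$ by the spectral theorem. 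With $f_r:=\int_r^\infty\psi_t\ast f\,\frac{dt}{t}$ (smooth) and $f-f_r=\int_0^r\psi_t\ast f\,\frac{dt}{t}$, I would take as affine competitor the first-order horizontal Taylor polynomial $g_{x,r}$ of $f_r$ at $x$, which is affine in the sense of the paper because $\pi$ is a homomorphism onto the abelian group $\R^{2n}$; this gives
\[
 \theta_f(B(x,r))^{1/2}\ \lesssim\ r^{-(n+2)}\bigl(\|f-f_r\|_{L_2(B(x,r))}+\|f_r-g_{x,r}\|_{L_2(B(x,r))}\bigr).
\]

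For the first term I would use $\|f-f_r\|_{L_2(B(x,r))}\le\int_0^r\|\psi_t\ast f\|_{L_2(B(x,r))}\,\frac{dt}{t}$, apply Cauchy--Schwarz with a weight $(t/r)^{\varepsilon}$, integrate in $x$ over $\H_n$ using $\int_{\H_n}\|g\|_{L_2(B(x,r))}^2\,\ud\cH^{Q}(x)\approx r^{Q}\|g\|_{L_2(\H_n)}^2$, and then integrate in $r$ against $\frac{dr}{r}$ and swap the order. The inner $r$-integral $\int_t^\infty r^{-2}(r/t)^{\varepsilon}\,\frac{dr}{r}\approx t^{-2}$ converges exactly because $\theta_f$ carries the surplus factor $r^{-2n-4}=r^{-Q}r^{-2}$ relative to the $L_2$-average; what remains is a multiple of $\int_0^\infty t^{-2}\|\psi_t\ast f\|_{L_2(\H_n)}^2\,\frac{dt}{t}$. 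Since $m(0)=0$ forces $m(v)=O(v)$ near $0$, the scalar integral $\int_0^\infty t^{-2}|m(t^2u)|^2\,\frac{dt}{t}$ is a finite constant times $u$, so the spectral theorem turns this term into $\lesssim\langle\Delta_{\H}f,f\rangle=\|\nabla_H f\|_{L_2(\H_n)}^2$.

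For the second term I would use the iterated $L_2$-Poincar\'e inequality on balls of $\H_n$, $\|f_r-g_{x,r}\|_{L_2(B(x,r))}\lesssim r^2\bigl(\|\nabla_H^2 f_r\|_{L_2(B(x,Cr))}+\|Z f_r\|_{L_2(B(x,Cr))}\bigr)$ for a dilation constant $C$, together with $\nabla_H^2\psi_t=t^{-2}(\nabla_H^2\psi)_t$ and $Z\psi_t=t^{-2}(Z\psi)_t$ (because $X_i,Y_i$ have $\delta_t$-degree $1$ and $Z$ degree $2$), so that the $t^{-2}$ cancels the $r^2$ after integrating over $t\ge r$. Repeating the Cauchy--Schwarz/Fubini scheme (now with weight $(r/t)^{\delta}$ and the $r$-integral running over $r\le t$) reduces this term to a multiple of $\int_0^\infty t^{-2}\|\rho_t\ast f\|_{L_2(\H_n)}^2\,\frac{dt}{t}$ with $\rho_t\ast f\in\{t^2\nabla_H^2(\psi_t\ast f),\ t^2 Z(\psi_t\ast f)\}$. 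Writing $\nabla_H^2(\psi_t\ast f)=(\nabla_H^2\Delta_{\H}^{-1})\,\Delta_{\H}m(t^2\Delta_{\H})f$ and invoking $L_2$-boundedness of $\nabla_H^2\Delta_{\H}^{-1}$ (Folland's Heisenberg Calder\'on--Zygmund estimates, which at $p=2$ are Plancherel identities) and $\|Z\Delta_{\H}^{-1}\|_{L_2\to L_2}\le n^{-1}$ (since $Z$ is central with $|Z|\le n^{-1}\Delta_{\H}$), this reduces to the scalar computation $\int_0^\infty t^{2}|u\,m(t^2u)|^2\,\frac{dt}{t}\approx u$, and hence again to $\lesssim\langle\Delta_{\H}f,f\rangle=\|\nabla_H f\|_{L_2(\H_n)}^2$.

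The hard part is this last kind of bookkeeping rather than any single estimate: one must balance the order of vanishing of $m$ at $0$ (equivalently, the number of vanishing moments of $\psi$), the $\delta_t$-homogeneities of $\nabla_H^2$ and $Z$, and the surplus $r^{-2}$ hidden in $\theta_f$, so that every $t$-integral converges at both endpoints and produces exactly $\langle\Delta_{\H}f,f\rangle$---not $\|f\|_{L_2}^2$ (too few moments) or $\|\Delta_{\H}f\|_{L_2}^2$ (mismatched powers of $t$). The genuinely Heisenberg-specific inputs one must supply are the functional calculus of $\Delta_{\H}$ with the stated $L_2$ bounds for $\nabla_H^2\Delta_{\H}^{-1}$ and $Z\Delta_{\H}^{-1}$, and the $L_2$-Poincar\'e inequality on balls of $\H_n$; the hypothesis $f\in L_2(\H_n)$ (rather than just $\nabla_H f\in L_2(\H_n)$) is needed only to legitimize the reproducing formula and the low-frequency part of the calculus. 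The same scheme, with $\Delta_{\H}$ replaced by the Euclidean Laplacian and $Z$ absent, recovers Dorronsoro's classical theorem.
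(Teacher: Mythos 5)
The paper does not prove this statement: Theorem~\ref{th:FO} is imported verbatim from F\"assler--Orponen~\cite{FO} (their Theorem~6.1) and used as a black box, so there is no in-paper argument to compare yours against. What you have written is a blind reconstruction of a result that the authors deliberately outsource.

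That said, your Littlewood--Paley sketch is a reasonable reconstruction of the kind of argument that does work, and the bookkeeping you highlight is indeed the crux. Two remarks. First, the split $f=(f-f_r)+f_r$ with a spectral reproducing formula $f=\int_0^\infty m(t^2\Delta_{\H})f\,\tfrac{dt}{t}$ and the horizontal first-order Taylor polynomial of $f_r$ as the affine competitor is sound: because the paper's affine functions are exactly $\tau\circ\pi$, they are precisely the functions killed by both $\nabla_H^2$ and $Z$, so these two operators are the right ``curvature'' quantities to control $f_r-g_{x,r}$. Your power counting (the surplus $r^{-2}$ in $\theta_f$ against $t^{-2}$ from $m(0)=0$ on one side, and $r^2$ from Poincar\'e against the $\delta_t$-degrees of $\nabla_H^2$ and $Z$ on the other, each collapsing via the change of variable $s=t^2u$ to a finite multiple of $u$) is consistent and does land on $\langle\Delta_{\H}f,f\rangle$ as required. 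Second, the one genuinely load-bearing step you state without justification is the second-order $L_2$ Poincar\'e inequality $\|f_r-g_{x,r}\|_{L_2(B(x,r))}\lesssim r^2\bigl(\|\nabla_H^2 f_r\|_{L_2(B(x,Cr))}+\|Z f_r\|_{L_2(B(x,Cr))}\bigr)$ on Heisenberg balls with the \emph{horizontal} first-order Taylor polynomial as the subtracted jet; this is true on stratified groups but it is not elementary, and a careful write-up would need to produce or cite it (it is one of the places where the Heisenberg structure, in particular the appearance of the degree-two field $Z$, forces a genuine deviation from the Euclidean Dorronsoro argument). Your normalization claim $\|Z\Delta_{\H}^{-1}\|_{L_2\to L_2}\le n^{-1}$ is correct by the spectral picture in the Schr\"odinger representations (lowest $\Delta_{\H}$-eigenvalue $n|\mu|$ against $|Z|=|\mu|$), and is the clean way to absorb the $Z$ term. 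In short: the approach is viable and close in spirit to what \cite{FO} actually do (they also run a scale-decomposition against the spectral calculus of $\Delta_{\H}$), but it proves a theorem the present paper only cites, and it leaves the Heisenberg Poincar\'e input as an unproved ingredient.
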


This will let us bound how well each slice $vW\cap \Gamma$ can be approximated by planes, but even if $vW \cap \Gamma$ is close to a plane for every $v$, the whole graph $\Gamma$ may not be.
We thus prove Theorem~\ref{thm:beta2 highdim} by considering slices of $\Gamma$ parallel to several different planes $W_i$.  
Using a bound that will be proved in Section~\ref{sec:slice affine}, we will show that if $vW_i \cap \Gamma$ is close to a plane for many different choices of $W_i$, then the whole graph $\Gamma$ is close to a plane.  We then deduce the strong geometric lemma for $\Gamma$ by applying Theorem~\ref{th:FO} to each slice.

We will need a weak local version of Theorem~\ref{th:FO}.
\begin{cor} \label{c:FO}
  Let $f \from \H_n \to \R$ be a Lipschitz function and $B(y,R) \subset \H_n$ a ball.  Then
  \begin{align*}
    \int_{B(y,R)} \int_0^R \theta_f(B(x,r)) \frac{\ud r}{r} \ud x \lesssim \|f\|_{\textrm{Lip}}^2 R^{2n+2}.
  \end{align*}
\end{cor}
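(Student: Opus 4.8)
The plan is to deduce Corollary~\ref{c:FO} from Theorem~\ref{th:FO} by a standard truncation-and-cutoff argument, replacing the globally Lipschitz (but not $L_2$) function $f$ by a compactly supported modification on a ball of radius $\sim R$ around $y$, and then restricting the domain of integration in space and scale so that the $\theta_f$ appearing in the integral only ``sees'' the modified function. First I would fix a cutoff function $\varphi\from \H_n\to[0,1]$ that is identically $1$ on $B(y,2R)$, supported on $B(y,3R)$, and satisfies $\|\nabla_H \varphi\|_{L_\infty}\lesssim R^{-1}$ (such $\varphi$ exists by mollifying the Carnot--Carathéodory distance to $B(y,2R)$, using left-invariance and the dilation structure). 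Set $\tilde f = \varphi\cdot(f - f(y))$. Then $\tilde f$ is supported on $B(y,3R)$, so $\tilde f\in L_2(\H_n)$ with $\|\tilde f\|_{L_2}\lesssim \|f\|_{\Lip} R\cdot R^{(2n+2)/2}$, and by the Leibniz rule for the horizontal gradient, $\nabla_H\tilde f = \varphi\,\nabla_H f + (f-f(y))\nabla_H\varphi$, so on $B(y,3R)$ we have $|\nabla_H\tilde f|\lesssim \|f\|_{\Lip} + R\|f\|_{\Lip}\cdot R^{-1}\lesssim \|f\|_{\Lip}$, giving $\|\nabla_H\tilde f\|_{L_2(\H_n)}^2\lesssim \|f\|_{\Lip}^2 R^{2n+2}$.

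Next I would observe that for $x\in B(y,R)$ and $0<r\le R$, the ball $B(x,r)$ is contained in $B(y,2R)$, where $\tilde f$ agrees with $f-f(y)$; since $\theta_f(B(x,r))$ is defined by an infimum over affine functions and is invariant under adding a constant to $f$, we get $\theta_f(B(x,r)) = \theta_{f - f(y)}(B(x,r)) = \theta_{\tilde f}(B(x,r))$ for all such $x,r$. Hence
\begin{align*}
  \int_{B(y,R)}\int_0^R \theta_f(B(x,r))\,\frac{\ud r}{r}\,\ud x
  = \int_{B(y,R)}\int_0^R \theta_{\tilde f}(B(x,r))\,\frac{\ud r}{r}\,\ud x
  \le \int_{\H_n}\int_0^\infty \theta_{\tilde f}(B(x,r))\,\frac{\ud r}{r}\,\ud x,
\end{align*}
where the last inequality is just enlarging the domain of integration (the integrand is nonnegative). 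Applying Theorem~\ref{th:FO} to $\tilde f$ — which is legitimate since $\tilde f\in L_2(\H_n)$ and $\nabla_H\tilde f\in L_2(\H_n)$ — bounds the right-hand side by $\lesssim \|\nabla_H\tilde f\|_{L_2(\H_n)}^2\lesssim \|f\|_{\Lip}^2 R^{2n+2}$, which is exactly the claim.

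I do not expect any serious obstacle here; the only point requiring a little care is the construction of the cutoff $\varphi$ with the correct horizontal-gradient bound, but this is standard in the Heisenberg setting (e.g.\ take $\varphi = \psi(R^{-1} d(\cdot, B(y,2R)))$ for a smooth one-variable bump $\psi$, noting $d(\cdot,B(y,2R))$ is $1$--Lipschitz in the CC metric hence has $|\nabla_H|\le 1$ a.e., and rescaling), and alternatively one can invoke the dilation/left-translation structure to reduce to a fixed cutoff on $B(\zero,3)$. A secondary minor technical point is ensuring that $\nabla_H\tilde f\in L_2$ in the a.e.\ sense needed for Theorem~\ref{th:FO}, which follows because $f$ Lipschitz implies $\nabla_H f\in L_\infty$ (Pansu--Rademacher), and the product with the compactly supported smooth $\varphi$ stays in $L_2$. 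Thus the corollary follows from Theorem~\ref{th:FO} essentially formally.
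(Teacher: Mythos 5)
Your proof is correct and follows the same overall strategy as the paper: truncate $f$ to a compactly supported Lipschitz function that agrees with $f$ (up to a constant) on $B(y,2R)$, control its $L_2$ horizontal gradient by $\lesssim \|f\|_{\lip}^2 R^{2n+2}$, note that $\theta_f$ and $\theta$ of the truncation agree on all balls $B(x,r)$ with $x\in B(y,R)$ and $r\le R$, and then invoke Theorem~\ref{th:FO}. The one point where you diverge from the paper is the mechanism of truncation: you multiply $f - f(y)$ by a smooth cutoff $\varphi$ with $\|\nabla_H \varphi\|_\infty \lesssim R^{-1}$ and use the Leibniz rule, whereas the paper defines $F$ to equal $f$ on $B(\zero,2R)$ and $0$ outside $B(\zero,4R)$, checks directly that this partial definition has the same Lipschitz constant as $f$, and then extends to all of $\H_n$ via a Lipschitz (McShane-type) extension, using that $\R$ is an absolute $1$--Lipschitz retract. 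The extension route keeps $\Lip(F)=\Lip(f)$ exactly and avoids discussing the horizontal gradient of a product; the cutoff route is arguably more familiar and requires no extension theorem, at the cost of building a CC-cutoff with the right gradient bound. Both are standard and correct, and the resulting constants are comparable.
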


\begin{proof}
  We translate so that $y=\zero$ and $f(\zero)=0$.  We first construct a Lipschitz function $F \from \H_n \to \R$ such that $F=f$ on $B(\zero,2R)$ and $\|\nabla_H F\|_{L_2(H_n)^2} \lesssim \|f\|_{\textrm{Lip}}^2 R^{2n+2}$.

  Let $S=B(\zero,2R)\cup (\H\setminus B(\zero,4R))$ and define $F\from S\to \R$ by
  \begin{align*}
    F(x) = \begin{cases}
      f(x), & d(\zero,x) \le 2R, \\
      0  & d(\zero,x) \ge 4R.
    \end{cases}
  \end{align*}
  If $p\in B(\zero,2R)$ and $d(\zero,q)\ge 4R$, then 
  $$\frac{|f(p)-f(q)|}{d(p,q)}\le \frac{\|f\|_{L_\infty(B(\zero,2R))}}{2R} \le \Lip(f),$$
  so $\Lip(F)=\Lip(f)$.  Since $\R$ is an absolute $1$--Lipschitz retract \cite[Ch.\ 1]{BL00}, we can extend $F$ to a Lipschitz function from $\H$ to $\R$ with $\Lip(F) = \Lip(f)$.  This function is supported in $B(\zero,4R)$, so
  $$\|\nabla_H F\|_{L_2(\H_n)}^2 \leq \Lip(F)^2 \cH^{2n+2}(B(\zero,4R))\lesssim \Lip(f)^2 R^{2n+2}$$
  as desired.

  If $x \in B(\zero,R)$ and $r < R$, then $f|_{B(x,r)} = F|_{B(x,r)}$ and thus $\theta_F(B(x,r)) = \theta_f(B(x,r))$.  Then
  \begin{align*}
    \int_{B(y,R)} \int_0^R \theta_f(B(x,r)) \frac{dr}{r} ~dx &= \int_{B(y,R)} \int_0^R \theta_F(B(x,r)) \frac{dr}{r} ~dx  \\
    &\leq \int_{\H_n} \int_0^\infty \theta_F(B(x,r)) \frac{dr}{r} ~dx  \\
    &\lesssim \|\nabla_H F\|_{L_2(\H_n)}^2 \\
    &\leq \|f\|_{\textrm{Lip}}^2 R^{2n+2}.
  \end{align*}
\end{proof}

We now prove the main theorem.

\begin{proof}[Proof of Theorem~\ref{thm:beta2 highdim}]
Let $Q(x,r)=Q_{Y_n}(x,r)$.
Let $\lambda'\in (\frac{1+\lambda}{2},1)$ be as in Lemma~\ref{lem:alpha compare saff}.  

We first  construct some $(2n-1)$--dimensional vertical subspaces $P,P_1,\dots, P_{2n-1}$.  Recall that $P_w=V_0\cap w^{\overline{\Omega}}$ and let
$$P:=P_{Y_n}=\Span (X_1,\dots, X_{n-1}, Y_1,\dots, Y_{n-1},Z).$$
Let $W_0=P+\langle Y_n\rangle= \{x_n=0\}$ and let $w_1,\dots,  w_{2n-1}\in A\cap W_0 \cap \Cone_{\lambda'}$ be linearly independent vectors such that $y_n(w_i)=1$.  Let $P_i:=P_{w_i}=V_0\cap w_i^{\overline{\Omega}}$.  Each vector $w_i$ is close to $Y_n$, so each plane $P_i$ is close to $P$.  Since the $w_i$'s span $A\cap W_0$, we have
$$\bigcap_i w_i^{\overline{\Omega}}= (A\cap W_0)^{\overline{\Omega}}=\Span(Z,Y_n),$$
and $\bigcap_i P_i=V_0\cap\bigcap_i w_i^{\overline{\Omega}}=\langle Z\rangle$. 
For all $i$ and all $x\in \Gamma$, let $Q_i(x,r) := Q_{w_i}(x,r)$.
 
In Lemma~\ref{lem:slicing vertical} in Section~\ref{sec:slice affine}, we will show that there is a $c>1$ depending on $\lambda$ and the $P_i$'s such that if $f^v\from V_0\to \R$ is a vertical function, then
$$\min_{h \in \Aff} \|f^v - h\|_{L_2(Q(x,r))} \lesssim \sum_i \min_{\sigma \in \SAff_{P_i}} \|f^v - \sigma\|_{L_2(Q(x,cr))}$$
for any $x\in \Gamma$ and $r>0$.  We also suppose that $c$ is large enough to satisfy Lemma~\ref{lem:alpha compare saff}, Lemma~\ref{l:quasi-Q}, and Lemma~\ref{lem:beta alpha}.  In particular,
\begin{align*}
  r^{n+3/2} \beta_\Gamma(x,r) &{\lesssim} \min_{h \in \Aff} \|f - h\|_{L_2(Q(x,cr))} = (*)
\end{align*}
for any $x\in \Gamma$.

Now, fix some $x\in \Gamma$ and let $f^v\in \Vert$ be the vertical function such that \begin{equation}\label{eq:def fv}
\|f-f^v\|_{L_2(Q(x,cr))}=\min_{h\in \Vert}\|f-h\|_{L_2(Q(x,cr))}.
\end{equation}
Let $f_{w_i}\from V_0\to \R$ be the parameterizing functions such that $\Gamma=\Gamma_{f_{w_i},w_i}$.
Then, by the triangle inequality,
\begin{align*}
  (*) &\leq \|f - f^v\|_{L_2(Q(x,cr))} + \min_{h \in \Aff} \|f^v - h\|_{L_2(Q(x,cr))} \\
  &\stackrel{\text{Lem.~\ref{lem:slicing vertical}}}{\lesssim} \|f - f^v\|_{L_2(Q(x,cr))} + \sum_i \min_{\sigma \in \SAff_{P_i}} \|f^v - \sigma\|_{L_2(Q(x,c^2r))}  \\
  &\lesssim \|f - f^v\|_{L_2(Q(x,cr))} + \sum_i \min_{\sigma \in \SAff_{P_i}} (\|f^v - f\|_{L_2(Q(x,c^2r))} + \|f - \sigma\|_{L_2(Q(x,c^2r))}) \\
  &\stackrel{\text{Lem.~\ref{lem:alpha compare saff}}}\lesssim \|f - f^v\|_{L_2(Q(x,c^2r))} + \sum_i \min_{\sigma \in \SAff_{P_i}}  \|f_{w_i} - \sigma\|_{L_2(Q_i(x,c^2r))}.
\end{align*}
By \eqref{eq:def fv} and the inclusion $\SAff_{P} \subseteq \Vert$, 
\begin{align}
  r^{n+3/2} \beta_\Gamma(x,r) &\leq \|f - f^v\|_{L_2(Q(x,c^2r))} + \sum_i \min_{\sigma \in \SAff_{P_i}} \|f_{w_i} - \sigma\|_{L_2(Q_i(x,c^2r))} \notag \\
  &\leq \min_{\sigma \in \SAff_{P}} \|f - \sigma\|_{L_2(Q(x,c^2r))} + \sum_i \min_{\sigma \in \SAff_{P_i}} \|f_{w_i} - \sigma\|_{L_2(Q_i(x,c^2r))} \label{e:nonpar-par}
\end{align}
Thus, in order to prove Theorem~\ref{thm:beta2 highdim}, it suffices to bound how well $f$ and the $f_{w_i}$'s can be approximated by slice-affine functions.


Let $p_0\in \Gamma$, $R>0$, and $B=B(p_0,R)$.  After a translation, we may suppose $p_0=\zero$.  Let 
$$\gamma(x,r)=r^{-2n-1} \min_{\sigma \in \SAff_P} \frac{\|f - \sigma\|^2_{L_2(Q(x,r))}}{r^2}.$$
We consider
$$I=\int_0^R r^{-2n-1} \int_{B \cap \Gamma} \min_{\sigma \in \SAff_P} \frac{\|f - \sigma\|^2_{L_2(Q(x,c^2r))}}{r^2} \ud x \frac{\ud r}{r} \approx \int_0^R \int_{B \cap \Gamma} \gamma(x,c^2r) \ud x \frac{\ud r}{r}.$$
We first pass from the integral above to a sum over the balls in a family of bounded-multiplicity covers $\cC_k$ of $\Gamma$.

By Lemma~\ref{l:quasi-Q} and our choice of $c$, we have
\begin{equation}\label{eq:c nesting}
Q(x,c^{-1}r)\subset \Pi(B(x,r)\cap \Gamma)\subset Q(x,cr)
\end{equation}
for any $x\in \Gamma$ and $r>0$.  For $k\ge 0$, let $\cN_k$ be a maximal $R2^{-k}$--net in $\Gamma\cap B$.  Let $r_k=2c^3 R2^{-k}$ and let $\cC_k=\{Q(p,r_k): p\in \cN_k\}$.  By the Ahlfors regularity of $\Gamma$, $\cC_k$ has bounded multiplicity, i.e., for any $x\in V_0$, $|\{Q\in \cC_k:x\in Q\}|\lesssim 1$.  Furthermore, there is a $D>0$ such that if $Q\in \cC_k$, then $Q\subset Q(\zero,DR)$.

For any $x\in \Gamma\cap B$, there is an $p\in \cN_k$ such that $d(x,p)\le R2^{-k}$, so by \eqref{eq:c nesting},
$$Q(x,R2^{-k}) \subset \Pi\bigl(B(x,c^2R2^{-k})\cap \Gamma\bigr) \subset \Pi\bigl(B(p,2c^2R2^{-k})\cap \Gamma\bigr) \subset Q(p,2c^3R2^{-k}),$$
and thus $\gamma(x,R2^{-k})\lesssim \gamma(p,r_k)$.  It follows that
\begin{align}
\notag  I
  &\lesssim \sum_{k=0}^\infty \sum_{p\in \cN_k} \cH^{2n+1}\bigl(B(p,R2^{-k})\cap \Gamma\bigr) \gamma(p,r_k) \\ 
\label{eq:graph discrete sgl}  &\lesssim \sum_{k=0}^\infty \sum_{p\in \cN_k} R^{2n+1}2^{-k(2n+1)} \gamma(p,r_k),
\end{align}
that is, $I$ is bounded by a sum over the quasiballs in the $\cC_k$'s.  

Next, we write the sum in \eqref{eq:graph discrete sgl} as an integral over slices. 
For notational convenience, we shorten $x_n^u$ to just $u$.  For nonempty sets $S\subset uP$, let
$$\theta^u(S)=\diam(S)^{-2n-2} \inf_{g \in \Aff} \|f-g\|^2_{L_2(S)},$$
and let $\theta^u(\emptyset)=0$.  By Section~\ref{sec:slices}, $uP$ is isomorphic to $\H_{n-1}$.  

Since $V_0$ is a product,
\begin{align*}
  \gamma(x,r) &=r^{-2n-1} \int_\R  \min_{g \in \Aff} \frac{\|f-g\|^2_{L_2(Q(x,r) \cap uP)}}{r^2} \ud u \\
  &= r^{-1} \int_\R \frac{\diam(Q(x,r) \cap uP)^{2n+2}}{r^{2n+2}} \theta^u(Q(x,r) \cap uP)\ud u\\
  &\approx r^{-1} \int_\R  \theta^u(Q(x,r) \cap uP)\ud u,
\end{align*}
where the last line follows from the fact that $Q(x,r) \cap uP$ is either empty or an $r$--quasiball (Lemma~\ref{lem:slices are quasi}).  Therefore,
\begin{align}\label{eq:sliced sgl}
  I
  &\lesssim \int_\R \sum_{k=0}^\infty \sum_{Q\in \cC_k} R^{2n}2^{-2nk} \theta^u(Q \cap uP)\ud u.
\end{align}

Finally, we apply Corollary~\ref{c:FO} in each slice and integrate.
We can slice each cover $\cC_k$ into a family of bounded-multiplicity covers of  slices.
Let $\cC^u_k=\{Q\cap uP:Q\in \cC_k\}\setminus\{\emptyset\}$.  This is a set of quasiballs in $uP$ with bounded multiplicity, and by Fubini's Theorem, we can rewrite \eqref{eq:sliced sgl} as
$$I \lesssim \int_\R \sum_{k=0}^\infty \sum_{S\in \cC^u_k} R^{2n}2^{-2nk} \theta^u(S)\ud u.$$

Let $a>0$ be such that $\diam S\le a r_k$ for any $S\in \cC^u_k$ and let $F_u=f|_{uP}$.  Then for any $S\in \cC^u_k$ and any $s\in S$, we have $\theta^u(S)\lesssim \theta_{F_u}(B_{uP}(s,ar_k)),$ where $\theta_{F_u}$ is as in in \eqref{eq:define theta}.  Since $\bigcup_{S\in \cC^u_k} S\subset Q(\zero,DR)$, the bounded multiplicity of $\cC^u_k$ implies
\begin{align*}
  \sum_{S\in \cC^u_k} R^{2n}2^{-2nk} \theta^u(S) 
  &\lesssim \sum_{S\in \cC^u_k}R^{2n}2^{-2nk} \cH^{2n}(S)^{-1} \int_S \theta_{F_u}(B_{uP}(s,ar_k))\ud s \\
  &\lesssim \int_{Q(\zero,DR) \cap uP} \theta_{F_u}(B_{uP}(s,ar_k))\ud s.
\end{align*}
This is zero when $|u|>DR$, so 
\begin{align*}
  I 
  & \lesssim \int_{-DR}^{DR} \sum_{k=0}^\infty \int_{Q(\zero, DR) \cap uP} \theta_{F_u}(B_{uP}(s,ar_k))\ud s \ud u\\
  & \lesssim \int_{-DR}^{DR} \int_{0}^{2ar_0} \int_{Q(\zero, DR) \cap uP} \theta_{F_u}(B_{uP}(s,r))\ud s \frac{\ud r}{r} \ud u.
\end{align*}
By Lemma \ref{lem:slices are quasi}, there is a $D'>0$ such that for each $u\in [-DR,DR]$, there is a ball $B_u\subset uP$ of radius $D'R$ such that $Q(p_0, DR) \cap uP\subset B_u$.  Thus, by Corollary \ref{c:FO},
\begin{align*}
  I &\lesssim \int_{-DR}^{DR} \int_0^{\infty} \int_{B_u} \theta^u(B_{uP}(s,r)) \ud s \frac{\ud r}{r}\ud u \\
  &\lesssim \int_{-DR}^{DR} \|F_u\|_{\lip}^2 (D'R)^{2n} \ud u \\
  &\lesssim_{\lambda} 2DR\cdot (D'R)^{2n} \approx R^{2n+1}.
\end{align*}
That is, 
\begin{align*}
  \int_0^R r^{-2n-1} \int_{B \cap \Gamma} \inf_{g \in \SAff_{P}} \frac{\|f - g\|^2_{L_2(Q(x,c^2r))}}{r^2} \ud x \frac{\ud r}{r} \lesssim_\lambda R^{2n+1}.
\end{align*}

Similarly, we can prove
\begin{align*}
  \int_0^R r^{-2n-1} \int_{B \cap \Gamma} \inf_{g \in \SAff_{P_i}} \frac{\|f_{w_i} - g\|^2_{L_2(Q_i(x,c^2r))}}{r^2} \ud x \frac{\ud r}{r} \lesssim_\lambda R^{2n+1}.
\end{align*}

This, together with \eqref{e:nonpar-par} allows us to conclude
\begin{align*}
  \int_{B \cap \Gamma} \int_0^R \beta_\Gamma(x,r)^2 \frac{\ud r}{r} \ud x \lesssim_\lambda R^{2n+1},
\end{align*}
as desired.
\end{proof}

\section{Slicing vertical functions}\label{sec:slice affine}

In this section, we prove bounds on functions that are close to affine on many families of parallel hyperplanes.  The simplest version of this bound deals with functions on the cube that are close to affine on any axis-parallel hyperplane.  For $V$ a vector space and $U\subset V$ a subspace, we define $\SAff_U(V)$ to be the set of measurable functions that are affine on every coset of $U$.
\begin{prop}\label{prop:slicing cube}
  Let $d\ge 3$ and let $R_1,\dots, R_{d}\subset \R^d$ be the coordinate $(d-1)$--planes, so that $R_i$ consists of points whose $i$th coordinate is zero.  Let $I=[-1,1]$ and let $\SAff_{R_i}(\R_d)\subset L_2(I^d)$ be the set of functions that are affine on each plane parallel to $R_i$.  Then, for any $g\in L_2(I^d)$,
  \begin{equation}\label{eq:slicing cube}
    \min_{\lambda\in \Aff} \|g-\lambda\|_2 \approx_d \sum_i \min_{\lambda\in \SAff_{R_i}(\R^d)} \|g-\lambda\|_2.
  \end{equation}
\end{prop}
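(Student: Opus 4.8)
The plan is to split the asserted equivalence into two inequalities and reduce the nontrivial one to an elementary counting statement about multi-indices. The easy inequality is immediate: an affine function on $\R^d$ is in particular affine on every coset of $R_i$, so $\Aff\subseteq\SAff_{R_i}(\R^d)$ for each $i$; hence $\min_{\lambda\in\SAff_{R_i}(\R^d)}\|g-\lambda\|_2\le\min_{\lambda\in\Aff}\|g-\lambda\|_2$, and summing the $d$ of these gives $\sum_i\min_{\lambda\in\SAff_{R_i}(\R^d)}\|g-\lambda\|_2\le d\min_{\lambda\in\Aff}\|g-\lambda\|_2$. So everything comes down to the reverse bound $\min_{\lambda\in\Aff}\|g-\lambda\|_2\lesssim_d\sum_i\min_{\lambda\in\SAff_{R_i}(\R^d)}\|g-\lambda\|_2$, which I expect to hold with constant $1$.

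For the reverse bound I would expand $g$ in a tensor basis of Legendre polynomials and reduce to comparing sets of Fourier coefficients. Let $P_0,P_1,P_2,\dots$ be the Legendre polynomials on $I=[-1,1]$ (so $P_0$ is constant and $P_1(t)=t$), and for a multi-index $\mathbf{k}=(k_1,\dots,k_d)\in\Z_{\ge0}^d$ put $q_{\mathbf{k}}(x)=c_{\mathbf{k}}\prod_{j=1}^{d}P_{k_j}(x_j)$, normalized so that $\{q_{\mathbf{k}}\}_{\mathbf{k}}$ is an orthonormal basis of $L_2(I^d)$; write $g=\sum_{\mathbf{k}}g_{\mathbf{k}}q_{\mathbf{k}}$ and $|\mathbf{k}|=\sum_j k_j$. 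The key observation is that each relevant subspace is the closed span of a sub-collection of this basis: $\Aff$ is spanned by $q_{\mathbf{0}}$ and the $q_{e_i}$, i.e.\ $\Aff=\overline{\Span}\{q_{\mathbf{k}}:|\mathbf{k}|\le1\}$; and for each $i$, $\SAff_{R_i}(\R^d)=\overline{\Span}\{q_{\mathbf{k}}:\sum_{j\ne i}k_j\le1\}$, since the restriction of $q_{\mathbf{k}}$ to a coset $\{x_i=c\}$ is a scalar multiple of $\prod_{j\ne i}P_{k_j}(x_j)$, which is affine in $(x_j)_{j\ne i}$ exactly when $\sum_{j\ne i}k_j\le1$, while conversely an $L_2$ function affine on every such coset has the form $\alpha(x_i)+\sum_{j\ne i}\beta_j(x_i)x_j$ with $\alpha,\beta_j\in L_2(I)$, whose Legendre expansion only involves those $q_{\mathbf{k}}$. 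Because the $L_2$-distance from $g$ to the closed span of a subfamily of an orthonormal basis equals the $\ell_2$-norm of the coefficients outside that subfamily, we get
$$\min_{\lambda\in\Aff}\|g-\lambda\|_2^2=\sum_{|\mathbf{k}|\ge2}g_{\mathbf{k}}^2\qquad\text{and}\qquad\min_{\lambda\in\SAff_{R_i}(\R^d)}\|g-\lambda\|_2^2=\sum_{\mathbf{k}:\,\sum_{j\ne i}k_j\ge2}g_{\mathbf{k}}^2,$$
and, since $\big(\sum_i a_i\big)^2\ge\sum_i a_i^2$ for $a_i\ge0$, it suffices to prove $\sum_{|\mathbf{k}|\ge2}g_{\mathbf{k}}^2\le\sum_{i=1}^{d}\sum_{\mathbf{k}:\,\sum_{j\ne i}k_j\ge2}g_{\mathbf{k}}^2$.

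This last inequality follows once we check that every multi-index $\mathbf{k}$ with $|\mathbf{k}|\ge2$ has $\sum_{j\ne i}k_j\ge2$ for at least one $i$ (multi-indices with $|\mathbf{k}|\le1$ contribute to neither side), and this is the only point where the hypothesis $d\ge3$ enters. Indeed, if $\sum_{j\ne i}k_j=|\mathbf{k}|-k_i\le1$ for every $i$, then $k_i\ge|\mathbf{k}|-1$ for all $i$, so $|\mathbf{k}|=\sum_i k_i\ge d(|\mathbf{k}|-1)$, forcing $|\mathbf{k}|\le\frac{d}{d-1}\le\frac32<2$ when $d\ge3$, a contradiction. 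For $d=2$ this fails at $\mathbf{k}=(1,1)$, i.e.\ $g=x_1x_2$, which lies in $\SAff_{R_1}(\R^2)\cap\SAff_{R_2}(\R^2)$ but not in $\Aff$, which is precisely why $d\ge3$ is needed. Granting the counting fact, the two displayed identities give $\min_{\lambda\in\Aff}\|g-\lambda\|_2\le\sum_i\min_{\lambda\in\SAff_{R_i}(\R^d)}\|g-\lambda\|_2$, and combined with the easy direction this proves \eqref{eq:slicing cube}. I do not anticipate a real obstacle beyond carefully matching the three subspaces with their Legendre-coefficient descriptions; the actual content of the proposition is the elementary counting fact together with the observation that it breaks down at $d=2$.
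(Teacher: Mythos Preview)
Your argument is correct, and it is genuinely simpler than the paper's. Both proofs expand $g$ in a tensor-product orthogonal basis on $I^d$ and compare the distances to $\Aff$ and to each $\SAff_{R_i}$ in terms of which basis elements lie in those subspaces. The paper uses the Haar basis $\Psi_{\mathbf{j},\mathbf{k}}$; since Haar wavelets are not orthogonal to linear functions, $\Aff$ is \emph{not} the span of a sub-collection of the $\Psi_{\mathbf{j},\mathbf{k}}$, and the paper must isolate the pieces $g_1$ and $g_2$ (functions depending on one or two coordinates) and prove two auxiliary lemmas establishing $\|g_1-\lambda(g_1)\|^2\approx_d\sum_l\|g_1-\lambda_l(g_1)\|^2$ and $\|g_2\|^2\approx_d\sum_l\|g_2-\lambda_l(g_2)\|^2$ by hand. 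Your choice of the Legendre basis is better adapted to the problem: because $P_0,P_1$ span the affine functions on $I$, both $\Aff$ and each $\SAff_{R_i}$ become exact coordinate subspaces $\{\mathbf{k}:|\mathbf{k}|\le 1\}$ and $\{\mathbf{k}:\sum_{j\ne i}k_j\le 1\}$, and the whole proposition collapses to the single pigeonhole fact that for $d\ge 3$ every $\mathbf{k}$ with $|\mathbf{k}|\ge 2$ has $|\mathbf{k}|-k_i\ge 2$ for some $i$. Your identification of $g=x_1x_2$ as the obstruction at $d=2$ matches the paper's implicit use of $d\ge 3$ in the bound $\sum_l\|g_2-\lambda_l(g_2)\|^2\ge (d-2)\|g_2\|^2$. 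The paper's route buys nothing extra here; your argument is shorter, gives the constant $1$ in the hard direction, and avoids the side lemmas entirely.
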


This proposition implies the bounds on slice-affine functions on $\H_n$ used in the proof of Theorem~\ref{thm:beta2 highdim}.
\begin{lemma}\label{lem:slicing vertical}
  Let $n\ge 2$ and let $P_1, \dots, P_{2n-1}\subset V_0$ be vertical hyperplanes in general position, i.e., vertical $(2n-1)$--planes such that $\bigcap_i P_i=\langle Z\rangle$.  There is a $c>1$ such that for any vertical function $f\in L_2(V_0)$, any $x\in \Gamma_{f}$, and any $r>0$,
  \begin{equation}\label{eq:slicing vertical}
    \min_{g\in \Aff} \|f-g\|_{L_2(Q(x,r))} \lesssim_{P_1,\dots, P_{2n-1}} \sum_i \min_{g\in \SAff_{P_i}(V_0)} \|f-g\|_{L_2(Q(x,cr))},
  \end{equation}
  where for convenience we define $Q(x,r)=Q_{Y_n}(x,r)$.
\end{lemma}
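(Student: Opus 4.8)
The plan is to deduce Lemma~\ref{lem:slicing vertical} from Proposition~\ref{prop:slicing cube} by transporting the statement through a linear change of coordinates. First I would observe that, by the scale-invariance of both sides of \eqref{eq:slicing vertical} under the dilations $\delta_t$ and by left-invariance of the construction of $Q_{Y_n}$, it suffices to treat the case $x=\zero$ and $r=1$, so that $Q(\zero,1)=R_{Y_n}$ is a fixed box in $V_0$. Here I would use that $Q(x,r)\cap uP$ being comparable to a quasiball (Lemma~\ref{lem:slices are quasi}) lets one absorb the distortion between $Q(x,cr)$ and $Q(x,r)$ into the implied constant: a vertical function restricted to $Q(x,cr)$ controls its restriction to $Q(x,r)$ up to a factor depending only on the nesting constant $c$, after enlarging $c$. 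So the content is really the case of a fixed box.

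Next I would set up the coordinate change. A vertical function $f$ on $V_0\cong\R^{2n-1}\times\R$ descends to a function $\bar f$ on $\pi(V_0)\cong\R^{2n-1}$; affine vertical functions correspond to affine functions on $\R^{2n-1}$, and $P_i$--slice affine vertical functions correspond to functions on $\R^{2n-1}$ that are affine on every coset of the $(2n-2)$--plane $\pi(P_i)$. Since the $P_i$ are in general position with $\bigcap_i P_i=\langle Z\rangle$, the hyperplanes $\pi(P_i)\subset\R^{2n-1}$ satisfy $\bigcap_i\pi(P_i)=\{0\}$, i.e., they are $2n-1$ hyperplanes in general position in $\R^{2n-1}$. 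Pick one conormal vector $\nu_i$ for each $\pi(P_i)$; general position means $\nu_1,\dots,\nu_{2n-1}$ form a basis of $\R^{2n-1}$. Let $T\in\GL(2n-1,\R)$ be the linear map sending the standard basis to this basis; then $T$ carries the coordinate hyperplanes $R_i$ of Proposition~\ref{prop:slicing cube} (with $d=2n-1\ge 3$, which is where $n\ge 2$ enters) onto the $\pi(P_i)$, carries affine functions to affine functions and $\SAff_{R_i}$ onto $\SAff_{\pi(P_i)}$. Applying Proposition~\ref{prop:slicing cube} to $g=\bar f\circ T$ on $I^d$, and noting that $T(I^d)$ is a fixed convex body comparable to the box $\pi(R_{Y_n})$ (again up to constants depending only on the $P_i$, absorbable by enlarging $c$ using the quasiball comparison), gives \eqref{eq:slicing vertical} for $\bar f$, hence for $f$ after reinstating the $z$--variable, on which all functions involved are constant.

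The one point that needs a little care, and which I expect to be the main (mild) obstacle, is matching the geometry of $Q(\zero,1)=R_{Y_n}$ and its slices to the cube $I^d$ of Proposition~\ref{prop:slicing cube}: the box $R_{Y_n}$ is a product $B_{C_{Y_n}}(\zero,1)\times[-1,1]\nu\times[-1,1]Z$, which after projecting is a product of a $(2n-2)$--ball with an interval, not literally a cube, and the image $T(I^d)$ is a parallelepiped rather than a ball. The fix is to sandwich everything between concentric cubes/balls and invoke that $\min_{\lambda}\|g-\lambda\|_{L_2(\Omega)}$ behaves monotonically and with bounded distortion under passing between two such comparable convex domains (this is exactly the kind of quasiball comparison already used repeatedly, e.g.\ in Lemma~\ref{lem:beta alpha} and Lemma~\ref{lem:alpha compare saff}); each such replacement costs only a constant depending on $n$ and the fixed data $P_1,\dots,P_{2n-1}$, and is compensated by choosing $c$ large. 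With these reductions in place, \eqref{eq:slicing vertical} is precisely \eqref{eq:slicing cube} read in the coordinates $T$, and the proof is complete.
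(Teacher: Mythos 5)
Your proposal follows the paper's approach exactly: descend from $V_0$ to $A_0\cong\R^{2n-1}$ using verticality of $f$ (and the fact that the $z$--fibers of $Q(x,r)$ all have the same length), apply a linear change of variables taking the $\pi(P_i)$ to the coordinate hyperplanes, invoke Proposition~\ref{prop:slicing cube} with $d=2n-1\ge 3$, and absorb the discrepancy between boxes, balls, and parallelepipeds via quasiball sandwiching at the cost of enlarging $c$. One small slip in the construction of $T$: the map sending the standard basis to the conormals $\nu_i$ does not carry $R_i$ onto $\pi(P_i)$ in general, since $T(R_i)=\Span\{\nu_j:j\ne i\}$ need not equal $\nu_i^\perp$; one should instead take $T$ to send $e_i$ to the dual basis vector $\mu_i$ (equivalently, $T$ is the inverse transpose of the map $e_i\mapsto\nu_i$), so that $T(R_i)=\Span\{\mu_j:j\ne i\}=\nu_i^\perp=\pi(P_i)$ — the paper only asserts the existence of such a map without constructing it.
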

\begin{proof}
After applying a translation, we may suppose that $x=\mathbf{0}$.

Let $A_0=A\cap V_0$ and let $P=P_{Y_n}=Y_n^{\overline{\Omega}}\cap V_0$.  Then $Q(x,r)=\Pi_{Y_n}(\delta_r(K))$
where 
$$K=\{s X_n + a + t Z: a\in B_A(x,1)\cap P, |s|\le 1, |t|\le 1\}.$$
Let $S_r=\pi(Q(x,r))$; this is isometric to the product of an $r$--ball and an interval of length $2r$, so it is a quasiball of radius $r$ in $A_0$.  Furthermore, for any $p\in S_r$, the intersection $\pi^{-1}(p)\cap Q(x,r)$ is an interval of length $2r^2$.  

We identify $A_0$ with $\R^{2n-1}$ by an isomorphism.  The projections $\pi(P_i)$ are subspaces of $A_0$ in general position, so there is a linear transformation $M\from A_0\to A_0$ such that $M(\pi(P_i))=R_i$ for all $i$.  Let $D_t=[-t,t]^{2n-1}\subset A_0$.  This is a quasiball in $A_0$, and $MS_r$ and $M^{-1}S_r$ are quasiballs with constants depending on $M$, so there is a $b>0$ depending only on $M$ such that $M S_t\subset D_{bt}$ and $D_t\subset M^{-1} S_{bt}$ for all $t>0$.

Since any $g\in \Aff$ is a vertical function,
$$\|f-g\|^2_{L_2(Q(x,r))}=2r^2 \|f-g\|^2_{L_2(S_r)}.$$
Let $h\from A_0\to \R$ be the function $h(v)=f(M^{-1}v)$; this is also a vertical function, and by a change of variables,
\begin{align*}
\min_{\lambda\in \Aff} \|f-\lambda\|^2_{L_2(Q(x,r))} 
& = \min_{\lambda\in \Aff} 2 r^2 \|f-\lambda\|^2_{L_2(S_r)} \\
& = \min_{\lambda\in \Aff} 2 r^2 |\det(M)|^{-1}\cdot \|h-\lambda\circ M^{-1}\|^2_{L_2(MS_r)}\\
& \le \min_{\lambda\in \Aff} 2 r^2 |\det(M)|^{-1} \|h-\lambda\|^2_{L_2(D_{br})}.
\end{align*}
By Proposition~\ref{prop:slicing cube}, 
$$\min_{\lambda\in \Aff} \|h-\lambda\|^2_{L_2(D_{br})} \lesssim \sum_i \min_{\lambda \in \SAff_{R_i}(\R^{2n-1})} \|h-\lambda\|^2_{L_2(D_{br})}.$$
Let $J_i$ be the determinant of the restriction $M^{-1}|_{R_i}\from R_i\to \pi(P_i)$.  If $\lambda \in \SAff_{R_i}(\R^{2n-1})$, then $\lambda \circ M \in \SAff_{\pi(P_i)}(A_0)$ and $\lambda \circ M \circ \pi \in \SAff_{P_i}(V_0)$, so by another change of variables,
\begin{align*}
\min_{\lambda \in \SAff_{R_i}(\R^{2n-1})} \|h-\lambda\|^2_{L_2(D_{br})}
& = \min_{\lambda \in \SAff_{R_i}(\R^{2n-1})} |J_i|^{-1} \|f-\lambda\circ M\|^2_{L_2(M^{-1} D_{br})}\\
& \le \min_{\lambda \in \SAff_{\pi(P_i)}(A_0)} |J_i|^{-1} \|f-\lambda\|^2_{L_2(S_{b^2r})}\\
& = 2 b^{-4} r^{-2} \min_{\lambda \in \SAff_{P_i}(V_0)} |J_i|^{-1} \|f-\lambda\|^2_{L_2(Q(x,b^2r))}.
\end{align*}
Combining these calculations, we find
\begin{align*}
\min_{\lambda\in \Aff} \|f-\lambda\|^2_{L_2(Q(x,r))} &\lesssim_{M} \min_{\lambda\in \Aff} r^2 \|h-\lambda\|^2_{L_2(D_{br})} \\ &\lesssim \sum_i r^2 \min_{\lambda \in \SAff_{R_i}(\R^{2n-1})} \|h-\lambda\|^2_{L_2(D_{br})}\\ &\lesssim_M b^{-4} r^{-2} \sum_i \min_{\lambda \in \SAff_{P_i}(V_0)} r^2 \|f-\lambda\|^2_{L_2(Q(x,b^2r)}.
\end{align*}
Since $b$ and $M$ depend on $P_1,\dots, P_n$, we conclude
$$\min_{\lambda\in \Aff} \|f-\lambda\|^2_{L_2(Q(x,r))} \lesssim_{P_1,\dots, P_n} \sum_i \min_{\lambda \in \SAff_{P_i}(V_0)} \|f-\lambda\|^2_{L_2(Q(x,b^2r)}.$$
as desired.
\end{proof}

In the rest of this section, we  prove Proposition~\ref{prop:slicing cube} using a wavelet decomposition.  Let $\psi_{j,k}\in L_2([-1,1])$ be the Haar wavelet basis for $L_2([-1,1])$.  That is, let $\psi_{0,0}(t)=1$, and for $j>0$ and $0\le k< 2^{i-1}$, let
$$\psi_{j,k}(t)=\begin{cases}
  -1 & 2k\cdot 2^{-j+1}-1 \le t < (2k+1) \cdot 2^{-j+1}-1 \\
  1 & (2k+1)\cdot 2^{-j+1}-1 \le t < (2k+2) \cdot 2^{-j+1}-1\\
  0 & \text{otherwise}.
\end{cases}$$
When $j>0$, the support of $\psi_{j,k}(t)$ is an interval of width $2^{-j+2}$.  

For multi-indices $\mathbf{j}=(j_1,\dots, j_d),\mathbf{k}=(k_1,\dots,k_d)\in \Z^d$ with $0\le k_i<\max \{1,2^{j_i-1}\}$, let
$$\Psi_{\mathbf{j},\mathbf{k}}(t_1,\dots, t_d) = \prod_{i=1}^d \psi_{j_i,k_i}(t_i).$$
Let $\supp(\mathbf{j})=\{i\mid j_i\ne 0\}$.  It is straightforward to check that the $\Psi_{\mathbf{j},\mathbf{k}}$'s are orthogonal.

Let
$$c_{\mathbf{j},\mathbf{k}}=\frac{\langle g, \Psi_{\mathbf{j},\mathbf{k}}\rangle}{\langle \Psi_{\mathbf{j},\mathbf{k}}, \Psi_{\mathbf{j},\mathbf{k}}\rangle}$$
so that 
$$g=\sum_{\mathbf{j},\mathbf{k}} c_{\mathbf{j},\mathbf{k}} \Psi_{\mathbf{j},\mathbf{k}}.$$

We partition this sum in two ways.  For every subset $S\subset \{1,\dots, d\}$, define
\begin{equation}\label{eq:subset decomp}
f_S=\mathop{\sum_{\mathbf{j},\mathbf{k}}}_{\supp(\mathbf{j})=S} c_{\mathbf{j},\mathbf{k}} \Psi_{\mathbf{j},\mathbf{k}}.
\end{equation}
For each $0\le i \le d$, let
$$g_i=\mathop{\sum_{\mathbf{j},\mathbf{k}}}_{|\supp(\mathbf{j})|=i} c_{\mathbf{j},\mathbf{k}} \Psi_{\mathbf{j},\mathbf{k}}=\sum_{|S|=i} f_S
.$$

Note that $f_S(t_1,\dots, t_d)$ depends only on the $t_i$ such that $i\in S$; in particular, $f_\emptyset=g_0$ is constant.  The $f_S$'s are pairwise orthogonal, as are the $g_i$'s.

Let $\lambda\from L_2([0,1]^d)\to \Aff$ be the orthogonal projection to $\Aff$.  Let $\SAff_i=\SAff_{R_i}(\R^d)$ and let $\lambda_i$ be the orthogonal projection to $\SAff_{i}$.  For every $\mathbf{j},\mathbf{k}$ with  $|\supp\mathbf{j}|\ge 2$, we have $\Psi_{\mathbf{j},\mathbf{k}}\perp \Aff$. Likewise, for every $\mathbf{j},\mathbf{k}$ with  $|\supp\mathbf{j}\setminus \{i\}|\ge 2$, we have $\Psi_{\mathbf{j},\mathbf{k}}\perp \SAff_i$.  Therefore,  
\begin{equation}\label{eq:lambda breakdown}
\lambda(g)=\lambda(g_0+g_1)=g_0+\lambda(g_1),
\end{equation}
and
\begin{equation}\label{eq:lambda i breakdown}
\lambda_i(g)=\lambda_i(g_0+g_1+g_2)=g_0+\lambda_i(g_1+g_2)
\end{equation}
for all $i$.

We will need two lemmas.
\begin{lemma}\label{lem:slice regression stuff}
Let $g_i$ and $f_S$ be as above.  For any $1\le l,m \le d$, $l\ne m$, we have $\lambda_l(f_{\{l\}})=f_{\{l\}}$, 
$\lambda_m(f_{\{l\}})=\lambda(f_{\{l\}})$, and 
$$\lambda_l(g_1)=f_{\{l\}} + \sum_{m\ne l} \lambda(f_{\{m\}}).$$
\end{lemma}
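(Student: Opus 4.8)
The plan is to exploit the fact that $f_{\{l\}}$ depends on a single coordinate. First I would observe that if $\supp(\mathbf{j})=\{l\}$ then $j_i=0$ for every $i\ne l$, which forces $k_i=0$ as well, so $\Psi_{\mathbf{j},\mathbf{k}}=\psi_{j_l,k_l}(t_l)$ is a function of $t_l$ alone; summing over such $\mathbf{j},\mathbf{k}$, the whole function $f_{\{l\}}$ depends only on $t_l$, and since every $\psi_{j,k}$ with $j>0$ has mean zero we get $\int_{-1}^1 f_{\{l\}}(t)\,dt=0$. Granting this, the first identity is immediate: a function of $t_l$ alone is constant, hence affine, on each coset of $R_l$, so $f_{\{l\}}\in\SAff_l$ and therefore $\lambda_l(f_{\{l\}})=f_{\{l\}}$.

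For the second identity I would first note that $\lambda(f_{\{l\}})$ is a scalar multiple of $t_l$: the functions $1,t_1,\dots,t_d$ are an orthogonal basis of $\Aff$ on $I^d$, and the inner product of $f_{\{l\}}$ against $1$ and against each $t_i$ with $i\ne l$ vanishes because $\int_{-1}^1 f_{\{l\}}\,dt=0$. Set $\rho:=f_{\{l\}}-\lambda(f_{\{l\}})$; this is again a function of $t_l$ alone, with $\int_{-1}^1\rho(t)\,dt=0$ and $\int_{-1}^1\rho(t)\,t\,dt=0$ by construction. The key point is then that $\rho\perp\SAff_m$ whenever $m\ne l$: a general element of $\SAff_m$ has the form $a_0(t_m)+\sum_{i\ne m}a_i(t_m)\,t_i$ with $a_i$ arbitrary $L_2$ functions of $t_m$, and pairing $\rho$ against each term while carrying out the $t_l$--integration first produces a factor $\int_{-1}^1\rho\,dt=0$ (for $a_0(t_m)$ and for $a_i(t_m)t_i$, $i\ne l$) or a factor $\int_{-1}^1\rho(t)\,t\,dt=0$ (for $a_l(t_m)t_l$). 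Since $\lambda(f_{\{l\}})\in\Aff\subseteq\SAff_m$ is fixed by $\lambda_m$, this gives $\lambda_m(f_{\{l\}})=\lambda_m(\lambda(f_{\{l\}}))+\lambda_m(\rho)=\lambda(f_{\{l\}})$.

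The third identity I would then deduce purely by linearity from $g_1=\sum_{m=1}^d f_{\{m\}}$: applying the first identity to the $m=l$ term and the second identity (with the roles of $l$ and $m$ exchanged) to every $m\ne l$ yields $\lambda_l(g_1)=\sum_{m}\lambda_l(f_{\{m\}})=f_{\{l\}}+\sum_{m\ne l}\lambda(f_{\{m\}})$. I do not expect a serious obstacle anywhere; the one place demanding a little care is the orthogonality computation $\rho\perp\SAff_m$, where one should be careful that the coefficient functions $a_i$ range over all of $L_2$ in the $t_m$ variable and should justify the Fubini interchange (for instance by first reducing to bounded $a_i$ by density).
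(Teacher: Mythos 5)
Your proof is correct, and the overall strategy — exploiting that $f_{\{l\}}$ depends only on $t_l$ — is the same as the paper's. The first and third identities are handled identically. For the second identity, the paper reasons more structurally: since $f_{\{l\}}$ is independent of $t_m$ and $\lambda_m$ acts slice-by-slice in $t_m$ (one can view $\SAff_{R_m}$ as $L_2(I,\ud t_m)$ tensored with the affine functions in the remaining variables), the projection $\lambda_m(f_{\{l\}})$ is also independent of $t_m$; being in $\SAff_{R_m}$ and $t_m$-independent, it is globally affine and hence equals $\lambda(f_{\{l\}})$. You instead compute $\lambda(f_{\{l\}})$ explicitly as a multiple of $t_l$ and verify the orthogonality $f_{\{l\}}-\lambda(f_{\{l\}})\perp\SAff_{R_m}$ by direct integration against the general form $a_0(t_m)+\sum_{i\ne m}a_i(t_m)t_i$ of an element of $\SAff_{R_m}$. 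Both arguments are sound; the paper's is a quick structural observation while yours is a short explicit computation, and the Fubini/measurability point you flag is indeed the only detail requiring care.
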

\begin{proof}
Since $f_{\{l\}}(t_1,\dots,t_l)$ depends only on $t_l$, it is constant on each plane parallel to $R_l$.  Therefore, $\lambda_l(f_{\{l\}})=f_{\{l\}}$.  

When $m\ne l$, then $f_{\{l\}}$ is independent of $t_m$, so $\lambda_m(f_{\{l\}})$ is independent of $t_m$ and affine on every plane parallel to $R_m$.  Therefore, $\lambda_m(f_{\{l\}})$ is affine and $\lambda_m(f_{\{l\}})=\lambda(f_{\{l\}})$.  
This implies
$$\lambda_l(g_1)=\sum_m\lambda_l( f_{\{m\}})=\lambda_l(f_{\{l\}}) + \sum_{m\ne l} \lambda(f_{\{m\}}).$$
\end{proof}

\begin{lemma}\label{lem:regression bounds}
\begin{equation}\label{eq:g regression}
  \|g-\lambda(g)\|_2^2
  =\| g_1-\lambda(g_1) \|^2 + \sum_{i=2}^\infty \|g_i\|^2,
\end{equation}
and for any $l$, 
\begin{align}\label{eq:g sliced regression}
  \|g-\lambda_l(g)\|^2
  &= \biggl\| g_1-\lambda_l(g_1) + g_2 - \lambda_l(g_2) + \sum_{i=3}^\infty g_i\biggr\|^2 \\ 
\notag  &= \left\| g_1-\lambda_l(g_1)\right\|^2 + \left\|g_2 - \lambda_l(g_2)\right\|^2 + \sum_{i=3}^\infty \left\| g_i\right\|^2.
\end{align}
\end{lemma}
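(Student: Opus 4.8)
\emph{Plan.} Both identities will follow from the Pythagorean theorem once the right-hand vectors are exhibited as orthogonal sums. Throughout I use the orthogonal splitting $L_2(I^d)=\bigoplus_S H_S$, where $H_S$ is the span of the $\Psi_{\mathbf{j},\mathbf{k}}$ with $\supp(\mathbf{j})=S$; thus $f_S\in H_S$, $g_i=\sum_{|S|=i}f_S$, the $g_i$ are pairwise orthogonal (as are the $f_S$), and the two perpendicularity facts recorded just before Lemma~\ref{lem:slice regression stuff} say that $g_i\perp\Aff$ whenever $i\ge 2$, and $g_i\perp\SAff_l$ whenever $i\ge 3$ (since then $|S\setminus\{l\}|\ge 2$ for every $S$ with $|S|=i$).

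For \eqref{eq:g regression}, I would start from \eqref{eq:lambda breakdown}, which gives $g-\lambda(g)=(g_1-\lambda(g_1))+\sum_{i\ge 2}g_i$. The tails $g_i$ ($i\ge 2$) are mutually orthogonal, and $g_1-\lambda(g_1)$ is orthogonal to each of them, because $g_1\perp g_i$ (disjoint wavelet supports) and $\lambda(g_1)\in\Aff\perp g_i$. The Pythagorean theorem then gives \eqref{eq:g regression} directly.

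For \eqref{eq:g sliced regression}, I would use \eqref{eq:lambda i breakdown} together with linearity of $\lambda_l$ to write $g-\lambda_l(g)=(g_1-\lambda_l(g_1))+(g_2-\lambda_l(g_2))+\sum_{i\ge 3}g_i$, which is the first claimed equality. For the second equality I must check the three pieces are pairwise orthogonal. The $g_i$ with $i\ge 3$ are mutually orthogonal, and each is orthogonal to $g_1-\lambda_l(g_1)$ and to $g_2-\lambda_l(g_2)$: orthogonality to $g_1,g_2$ is by disjoint supports, and orthogonality to $\lambda_l(g_1),\lambda_l(g_2)\in\SAff_l$ is exactly $g_i\perp\SAff_l$ for $i\ge 3$. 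The remaining orthogonality, $\langle g_1-\lambda_l(g_1),\,g_2-\lambda_l(g_2)\rangle=0$, is the step I expect to require the most care. I would handle it by first invoking Lemma~\ref{lem:slice regression stuff} to rewrite $g_1-\lambda_l(g_1)=\sum_{m\ne l}\bigl(f_{\{m\}}-\lambda(f_{\{m\}})\bigr)$, and then showing each summand is orthogonal to $g_2-\lambda_l(g_2)$. Orthogonality of $f_{\{m\}}-\lambda(f_{\{m\}})$ to $g_2$ is immediate ($f_{\{m\}}\perp g_2$ by disjoint supports and $\lambda(f_{\{m\}})\in\Aff\perp g_2$). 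For orthogonality to $\lambda_l(g_2)$ I would move the self-adjoint projection $\lambda_l$ across the inner product and observe, again via Lemma~\ref{lem:slice regression stuff}, that $\lambda_l(f_{\{m\}})=\lambda(f_{\{m\}})$, and that $\lambda_l(\lambda(f_{\{m\}}))=\lambda(f_{\{m\}})$ since $\Aff\subseteq\SAff_l$; hence $\lambda_l\bigl(f_{\{m\}}-\lambda(f_{\{m\}})\bigr)=0$ and so $\langle f_{\{m\}}-\lambda(f_{\{m\}}),\,\lambda_l(g_2)\rangle=\langle 0,g_2\rangle=0$. Summing over $m\ne l$ yields $\langle g_1-\lambda_l(g_1),\,g_2-\lambda_l(g_2)\rangle=0$, and a final application of the Pythagorean theorem gives \eqref{eq:g sliced regression}.
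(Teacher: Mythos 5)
Your argument is correct and follows essentially the same route as the paper: both parts start from the decompositions \eqref{eq:lambda breakdown} and \eqref{eq:lambda i breakdown} and reduce to the Pythagorean theorem, with the only nontrivial orthogonality $\langle g_1-\lambda_l(g_1),\,g_2-\lambda_l(g_2)\rangle=0$ handled via Lemma~\ref{lem:slice regression stuff}; your use of self-adjointness of $\lambda_l$ is a cosmetic rephrasing of the paper's direct appeal to $(g_1-\lambda_l(g_1))\perp\SAff_l$. One small wording caveat: $g_1\perp g_2$ holds because $\Psi_{\mathbf{j},\mathbf{k}}$ with different $\supp(\mathbf{j})$ are orthogonal basis elements, not because the functions themselves have ``disjoint supports'' (they generally overlap spatially).
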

\begin{proof}
By \eqref{eq:lambda breakdown} and the orthogonality of the $g_i$'s,
$$\|g-\lambda(g)\|_2^2=\left\|g_1-\lambda(g_1)+ \sum_{i=2}^\infty g_i\right\|_2^2= \|g_1-\lambda(g_1)\|_2^2+ \sum_{i=2}^\infty \|g_i\|_2^2 + 2 \sum_{i=2}^\infty\langle g_1-\lambda(g_1), g_i \rangle
.$$
For $i\ge 2$, $g_i$ is orthogonal to $g_1$ and to $\Aff$, so $\langle g_1-\lambda(g_1), g_i \rangle=0$.  This implies \eqref{eq:g regression}.

By \eqref{eq:lambda i breakdown},
$$g-\lambda_l(g)=(g_1-\lambda_l(g_1)) + (g_2 - \lambda_l(g_2)) + \sum_{i=3}^\infty g_i.$$
We claim that the terms in this sum are orthogonal.  When $i\ge 3$, $g_i$ is orthogonal to $g_1, g_2,$ and $\SAff_{l}$, so it suffices to check that $\langle g_1-\lambda_l(g_1), g_2 - \lambda_l(g_2)\rangle=0$.

Since  $(g_1 - \lambda_l(g_1))\perp \SAff_{l}$ and $\langle g_1,g_2\rangle=0$,
$$\langle g_1-\lambda_l(g_1), g_2 - \lambda_l(g_2)\rangle =\langle g_1-\lambda_l(g_1), g_2\rangle = -\langle \lambda_l(g_1), g_2\rangle.$$

By Lemma~\ref{lem:slice regression stuff}, $$\langle\lambda_l(g_1),g_2\rangle=\langle \alpha, g_2\rangle+\langle f_{\{l\}},g_2\rangle,$$
where $\alpha=\sum_{m\ne l} \lambda(f_{\{m\}})$ is affine.  Since $g_2$ is a sum of $\Psi_{\mathbf{j},\mathbf{k}}$'s with $|\supp(\mathbf{j})|=2$, it is orthogonal to any affine function.  Likewise, $f_{\{l\}}$ is a sum of $\Psi_{\mathbf{j},\mathbf{k}}$'s with $|\supp(\mathbf{j})|=1$, so $f_{\{l\}}$ is orthogonal to $g_2$.  Therefore,
$$\langle g_1-\lambda_l(g_1), g_2 - \lambda_l(g_2)\rangle=-\langle \lambda_l(g_1), g_2\rangle=0,$$
as desired.
\end{proof}

Next, we use these lemmas to show  that 
\begin{equation}\label{eq:g1 bound}
  \|g_1-\lambda(g_1)\|^2 \approx_d  \sum_{l=1}^d \|g_1-\lambda_l(g_1)\|^2
\end{equation}
and
\begin{equation}\label{eq:g2 bound}
   \|g_2\|^2 \approx_d \sum_{l=1}^d \|g_2-\lambda_l(g_2)\|^2.
\end{equation}


To prove \eqref{eq:g1 bound}, we decompose the left and right sides.  For the left side, note that $g_1=\sum_i f_{\{i\}}$, so 
$$g_1-\lambda(g_1)=\sum_i f_{\{i\}}-\lambda(f_{\{i\}}).$$
Each function $f_{\{i\}}(t_1,\dots,t_d)-\lambda(f_{\{i\}})(t_1,\dots,t_d)$ depends only on $t_i$ and satisfies
$$\int_{[-1,1]^d}f_{\{i\}}-\lambda(f_{\{i\}})\;d\mu = 0,$$
so when $i\ne j$,
$$\langle f_{\{i\}}-\lambda(f_{\{i\}}), f_{\{j\}}-\lambda(f_{\{j\}})\rangle=0.$$
Therefore, 
$$\|g_{1}-\lambda(g_{1})\|^2 = \sum_m\|f_{\{m\}} - \lambda(f_{\{m\}})\|^2.$$

For the right side, by Lemma~\ref{lem:slice regression stuff},
$$g_1-\lambda_l(g_1) = \sum_{m\ne l}\left(f_{\{m\}}-\lambda(f_{\{m\}})\right).$$
As noted above, the terms in the sum are pairwise orthogonal, so 
$$\|g_1-\lambda_l(g_1)\|^2 = \sum_{m\ne l} \left\|f_{\{m\}}- \lambda(f_{\{m\}})\right\|^2.$$
Summing over $l$, we get
$$\sum_l \|g_1-\lambda_l(g_1)\|^2 = \sum_{m} (d-1) \left\|f_{\{m\}}- \lambda(f_{\{m\}})\right\|^2 = (d-1) \|g_{1}-\lambda(g_{1})\|^2,$$
which proves \eqref{eq:g1 bound}.

To prove \eqref{eq:g2 bound}, note that if $|S\setminus \{l\}|\ge 2$, then $f_S$ is orthogonal to $\SAff_{l}$.  Let
$$h_l=\mathop{\sum_{S\subset \{1,\dots, d\}\setminus \{l\}}}_{|S|=2} f_{S}$$
and $k_l=g_2-h_l$, so that $h_l$ is orthogonal to $k_l$ and to $\SAff_{l}$.

Then
$$\|g_2-\lambda_l(g_2)\|^2 = \|h_{l}+k_{l}-\lambda_l(g_2)\|^2= \|h_{l}\|^2 +\|k_{l}-\lambda_l(g_2)\|^2\ge \|h_l\|^2$$
and
$$\sum_l \|g_2-\lambda_l(g_2)\|^2\ge \sum_l \|h_l\|^2 =\sum_l \mathop{\sum_{S\subset \{1,\dots, d\}\setminus \{l\}}}_{|S|=2} \|f_{S}\|^2.$$
Each subset $S$ of order $2$ appears $d-2$ times in the sum on the right, so
$$\sum_l \|g_2-\lambda_l(g_2)\|^2\ge (d-2) \sum_{|S|=2} \|f_{S}\|^2=(d-2) \|g_2\|^2.$$
Conversely, $\|g_2-\lambda_l(g_2)\|\le \|g_2-\lambda(g_2)\|$, so $\sum_l \|g_2-\lambda_l(g_2)\|^2\le d \|g_2-\lambda(g_2)\|^2$.  This proves \eqref{eq:g2 bound}.

Finally, we prove Proposition~\ref{prop:slicing cube}.  By \eqref{eq:g regression},
$$\min_{\lambda\in \Aff} \|g-\lambda\|^2 = \|g-\lambda(g)\|^2 = \| g_1-\lambda(g_1) \|^2 + \sum_{i=2}^\infty \|g_i\|^2.$$
By \eqref{eq:g sliced regression}--\eqref{eq:g2 bound},
\begin{align*}
  \sum_i \min_{\lambda\in \SAff_i} \|g-\lambda\|_2 
  &=\sum_l \|g-\lambda_l(g)\|^2\\ 
  &= \sum_l \left(\left\| g_1-\lambda_l(g_1)\right\|^2 + \left\|g_2 - \lambda_l(g_2)\right\|^2 + \sum_{i=3}^\infty \left\| g_i\right\|^2\right) \\
  & \approx_d \left\| g_1-\lambda(g_1)\right\|^2 + \left\|g_2\right\|^2 + \sum_{i=3}^\infty \left\| g_i\right\|^2 \\ 
  &= \min_{\lambda\in \Aff} \|g-\lambda\|^2,
\end{align*}
as desired.

\bibliographystyle{alpha}
\bibliography{slicing}
\end{document}